\newtheorem{Theorem}{Theorem}[section]
\newtheorem{Lemma}[Theorem]{Lemma}
\newtheorem{Corollary}[Theorem]{Corollary}
\newtheorem{Proposition}[Theorem]{Proposition}
\newtheorem{Remark}[Theorem]{Remark}
\newtheorem{Example}[Theorem]{Example}
\def\reg{\operatorname{reg}}
\def\ini{\operatorname{ini}}
\def\depth{\operatorname{depth}}
\def\ini{{\operatorname{in}}}
\def\Exp{\operatorname{Exp}}
\def\mm{{\mathfrak m}}
\def\nn{{\mathfrak n}}
\def\q{{\mathfrak q}}
\def\NN{{\mathbb N}}
\def\a{{\mathbf a}}
\def\e{{\mathbf e}}
\begin{document}

\title{Castelnuovo-Mumford regularity\\ and Ratliff-Rush closure}

\author{Maria Evelina Rossi}
\address{Department of Mathematics, University of Genoa, Via Dodecaneso 35, 16146 Genoa, Italy}
\email{rossim@dima.unige.it}

\author{Dinh Thanh Trung}
\address{Department of Mathematics, FPT University, 8 Ton That Thuyet, My Dinh, Tu Liem, Hanoi, Vietnam}
\email{trung.dinh.nb@gmail.com}

\author{Ngo Viet Trung}
\address{Institute of Mathematics, Vietnam Academy of Science and Technology, 18 Hoang Quoc Viet, Hanoi, Vietnam}
\email{nvtrung@math.ac.vn}

\thanks{ The two last authors are supported by Vietnam National Foundation for Science and Technology Development under grant number 101.04-2017.19. Part of the paper was carried out when the last author visited Genova in July 2014. 
He would like to thank INdAM (GNSAGA) and the Department of Mathematics of the University of Genova for their  support and hospitality.} 

\keywords{Castelnuovo-Mumford regularity, Rees algebra, fiber ring, superficial sequence, reduction number, Ratliff-Rush closure, $d$-sequence, Buchsbaum ring, monomial ideal}
\subjclass{Primary 13C05, 13A30; Secondary 13H10, 14B05}

\begin{abstract}
We establish strong relationships between the Castelnuovo-Mumford regularity  and the Ratliff-Rush closure of an ideal. 
Our results have several interesting consequences on the computation of the Ratliff-Rush closure, the stability of the Ratliff-Rush filtration, the invariance of the reduction number, and the computation of the Castelnuovo-Mumford regularity of the Rees algebra and the fiber ring. In particular, we prove that the Castelnuovo-Mumford regularity of the Rees algebra and of the fiber ring are equal for large classes of monomial ideals in two variables, thereby verifying a conjecture
of Eisenbud and Ulrich for these cases.
\end{abstract}

\maketitle

\centerline{\small \it Dedicated to Giuseppe Valla on the occasion of his seventieth anniversary} \vskip 0.7cm

\section*{Introduction}

The Castelnuovo-Mumford regularity is a homological invariant which controls the complexity of graded structures over a polynomial rings \cite{EG}. It is well known that the Castelnuovo-Mumford regularity plays an important role in computational Algebraic Geometry and Commutative Algebra \cite{BM, Va}. It is less known that the Castelnuovo-Mumford regularity can be also defined for graded structures over a commutative ring and that the Castelnuovo-Mumford regularity of the Rees algebra controls the complexity of an ideal; see e.g. \cite{Tr1,Tr2,Tr3}. The main aim of this paper is to explore this aspect of the Castelnuovo-Mumford regularity by establishing relationships between the Castelnuovo-Mumford regularity of the Rees algebra and the seemingly unrelated Ratliff-Rush closure of an ideal. 
\par

The motivation for our work originates from the following conjecture of Eisenbud and Ulrich \cite[Conjecture 1.3]{EU}. \medskip

\noindent {\bf Conjecture}.  
Let $A$ be a standard graded algebra over a field $k$.
Let $I$ be an $\mm$-primary graded ideal generated by forms of the same degree, where $\mm$ denotes the maximal graded ideal of $A$. Then   
$\reg R(I) = \reg F(I),$ 
where $R(I) = \oplus_{n \ge 0}I^n$ is the Rees algebra and $F(I) = \oplus_{n\ge 0}I^n/\mm I^n$ is the fiber ring of $I$. \medskip
   
For an arbitrary graded ideal $I$, it is known that $\reg I^n$ is asymptotically a linear function \cite{CHT, Ko, TW}. 
However, very little is known on the stability index of $\reg I^n$, i.e. the least number $n$ where $\reg I^n$  becomes a linear function afterward. Eisenbud and Ulrich \cite{EU} showed that under the above assumption, $\reg I^n$ is related to the presentation of $R(I)$ as a direct sum of modules over $F(I)$, which has led them to raise the above conjecture.  \par

The conjecture of Eisenbud and Ulrich is not true if one does not put further assumption on the base ring $A$. We shall see that if the conjecture were true, then $A$ must be a Buchsbaum ring. In fact, Ulrich communicated to the authors that the conjecture should be formulated  for polynomial rings. To give an answer to 
 this modified conjecture seems to be difficult because there were no tools, which allow us to compare $\reg R(I)$ and $\reg F(I)$. 
Note that $R(I)$ also has an $\NN$-graded structure over $k$, which has a different Castelnuovo-Mumford regularity. 
This Castelnuovo-Mumford regularity was studied by Herzog, Popescu and Trung in \cite{HPT}. \par 

We shall see in this paper that superficial sequences of $I$ can be used as a common tool to characterize 
 $\reg R(I)$ and $\reg F(I)$. As a first consequence, we show that $\reg R(I) = \reg F(I)$ provided $ \depth G(I) \ge \dim A-1$, where $G(I) := \oplus_{n \ge 0} I^n/I^{n+1}$ is the associated graded ring of $I$. 
Our main results are however the findings that $\reg R(I)$ and $\reg F(I)$ control the behavior of  the Ratliff-Rush closure and the Ratliff-Rush filtration of $I$. If $\dim A = 2$, we can express $\reg R(I)$ and $\reg F(I)$ in terms of the stability of the Ratliff-Rush filtration. These relationships are of independent interests as we shall see below. \par

Let $(A,\mm)$ be an arbitrary local ring and $I$ an arbitrary ideal of $A$. 
The Ratliff-Rush closure of $I$ is defined as the ideal
$\tilde I := \bigcup_{n \ge 1} I^{n+1}:I^n.$
It is a refinement of the integral closure of $I$ and $\tilde I = I$ if $I$ is integrally closed. If $I$ is a regular ideal,  i.e. if $I$ contains non-zerodivisors, $\tilde I$ is the largest ideal sharing the same higher powers with $I$ \cite{RR}. In particular, the Ratliff-Rush filtration $\{\widetilde {I^n}\}$, carries important information on the blowups, the associated graded ring of $I$, and the Hilbert function of an $\mm$-primary ideal $I$ (see e.g. \cite{HJLS,HLS,Ho,Sa}). \par

In general, the computation of $\tilde I$ is hard because $I^{n+1}:I^n = I^n:I^{n-1}$ does not imply $I^{n+2}:I^{n+1} = I^{n+1}:I^n$ \cite{RS}.  Let  $s(I)$ denote
the least integer $m \ge 0$ such that $\tilde I = I^{n+1}:I^n$ for all $n \ge m$.   
We show that 
$$s(I) \le \max\{\reg R(I)-1,0\}.$$
Since there are various bound for $\reg R(I)$ in terms of other well known invariants of $I$ (\cite{DGV}, \cite{DH}, \cite{Li1}, \cite{Li2}, \cite{RTV1}, \cite{St},  \cite{Va}), 
the above bound provides us an {\em effective tool for the computation of $\tilde I$} because $\tilde I = I^{c+1}:I^c$ if $c$ is an upper bound for $\reg R(I)$. As far as we know, an upper bound for $s(I)$ has been given before only for $\mm$-primary ideals in Cohen-Macaulay rings by Elias \cite{El}.  \par

A remarkable feature of the Ratliff-Rush filtration is that $\widetilde {I^n} = I^n$ for  $n \gg 0$ if $I$ is a regular ideal \cite{RR}. Again, if $\widetilde {I^n} = I^n$, it does not necessarily imply $\widetilde {I^{n+1}} = I^{n+1}$ \cite{RS}. Let $s^*(I)$ denote the least integer $m \ge 1$ such that $\widetilde {I^n} = I^n$ for all $n \ge m$.   This invariant was studied to some extent in \cite{Pu, RS}.
We show that 
$$s^*(I) \le \max\{\reg R(I),1\}.$$

On the other hand, we can characterize $\reg R(I)$ in terms of $s^*(I)$ if $I$ is an $\mm$-primary ideal, which is not a parameter ideal, in a two-dimensional regular local ring. In fact, we show that
$$\reg R(I) = \max\{r_J(I), s^*(I)\}  = \min\{n \ge r_J(I)|\ \widetilde {I^n} = I^n\},$$
where $J$ is an arbitrary minimal reduction of $I$ and $r_J(I)$ denotes the reduction number of $I$ with respect to $J$. 
The first formula has interesting consequences on the invariance of the reduction number, while the last formula gives {\em an effective tool for the computation of $\reg R(I)$}.  Rossi and Swanson in \cite[Section 4]{RS} asked whether $s^*(I) \le r_J(I)$ holds in general. Using Huckaba's example on the non-invariance of the reduction number \cite{Huc} we give a negative answer to this question. \par

Similarly, we show that if $A$ is a polynomial ring in two variables and $I$ is a homogeneous $\mm$-primary ideal generated by forms of the same degree $d$, which is not a parameter ideal, then  
$$\reg F(I) =   \min\{n \ge r_J(I)|\ (\widetilde {I^n})_{nd} = (I^n)_{nd}\}$$
for any homogeneous minimal reduction $J$ of $I$. 
Since $nd$ is the initial degree of both ideals $I^n$ and $\widetilde {I^n}$, this formula shows that $\reg F(I)$ controls the difference between $\widetilde {I^n}$ and $I^n$ at the initial degree, while $\reg R(I)$ controls the difference between $\widetilde {I^n}$ and $I^n$ at all degrees. \par

The above formulae can be used effectively to compare $\reg R(I)$ and $\reg F(I)$.  In fact,
we will use them to prove that $\reg R(I) = \reg F(I)$ for large classes of monomial ideals in two variables.
To compute the Ratliff-Rush closure of monomial ideals generated by monomials of the same degree in two variables one may use a method developed by Crispin Quinonez in \cite{Cr}. \par

We note that Cortadellas and Zarzuela \cite{CZ}, and Jayanthan and Nanduri \cite{JN} gave some conditions for the equality $\reg R(I) = \reg F(I)$ in the case $I$ is an ideal in a local ring. However, their results are too specific to be recalled here. \par

The paper is divided into four sections. 
In Section 1 we present a characterization of $\reg R(I)$ in terms of a superficial sequence of $I$ and discuss its relationship to $s(I)$. In Section 2 we show that $\reg R(I)$ is an upper bound for $s^*(I)$ and that one can characterize $\reg R(I)$ in terms of $s^*(I)$ in the two-dimensional case. In Section 3 we give a characterization of $\reg F(I)$ in terms of a superficial sequence of $I$ and of the Ratliff-Rush filtration.  
In Section 4 we apply our results to monomial ideals in two variables.

%%%%%%%%%%%%%%%%%%%%%%%%%%%%%%

\section{Regularity of the Rees algebra}

Let $(A,\mm)$ be a local ring with $\dim A > 0$ and $I$ an ideal of $A$. Without loss of generality we may assume that the residue field of $A$ is infinite.\par

An element $x \in I$ is called {\em superficial} for $I$ if
there is an integer $c$ such that 
$$(I^{n+1} : x) \cap I^c = I^n$$
for all large $n$. 
A system of elements  $x_1,....,x_s$ in $I$ is called a {\em superficial sequence} of $I$ if $x_i$ is a superficial element
of $I$ in $A/(x_1,....,x_{i-1})$, $i = 1,...,s$. A superficial sequences can be constructed by means of the following notion. \par

A system of homogeneous elements $z_1,...,z_s$ in the associated graded ring $G(I)$ is called {\em filter-regular} if 
$$[(z_1,...,z_{i-1}):z_i]_n = (z_1,...,z_{i-1})_n,$$
for sufficiently large $n$, $i = 1,...,s$. It is easy to see that $z_1,...,z_s$ is filter-regular if and only if $x_i \not\in P$ for all associated primes $P \not\supseteq G(I)_+$ of $(z_1,...,z_{i-1})$, $i = 1,...,s$ (see \cite{Tr1}). \par

For every element $x \in I$ we denote by $x^*$ the  residue class of $x$ in $I/I^2 \subset G(I)$.  

\begin{Lemma} \label{superficial}  \cite[Lemma 6.2]{Tr2}
$x_1,....,x_s$ is a superficial sequence of $I$ if and only if $x_1^*,...,x_s^*$ forms a filter-regular sequence of $G(I)$.
\end{Lemma}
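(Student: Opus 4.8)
The plan is to prove both implications by induction on $s$, reducing everything to the case $s=1$ and then propagating the equivalence through the quotient rings $A/(x_1,\dots,x_{i-1})$ that appear in the definition of a superficial sequence.

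For the base case $s=1$, the crux is a direct computation of the annihilator of $x^{*}=x+I^{2}\in [G(I)]_{1}$. For $a\in I^{n}$ one has $x^{*}\cdot(a+I^{n+1})=xa+I^{n+2}$, so
$$[(0):_{G(I)}x^{*}]_{n}=\frac{(I^{n+2}:x)\cap I^{n}}{I^{n+1}}.$$
Hence $x^{*}$ is filter-regular exactly when $(I^{n+2}:x)\cap I^{n}=I^{n+1}$ for all $n\gg 0$, and it remains to match this with superficiality. If $x$ is superficial, say $(I^{n+1}:x)\cap I^{c}=I^{n}$ for $n\ge n_{0}$ with $c$ fixed, then for $n\ge\max\{c,n_{0}-1\}$ the inclusion $I^{n}\subseteq I^{c}$ gives $(I^{n+2}:x)\cap I^{n}\subseteq(I^{n+2}:x)\cap I^{c}=I^{n+1}$, the reverse inclusion being trivial. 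For the converse I would upgrade the ``moving window'' condition $(I^{m+1}:x)\cap I^{m-1}=I^{m}$ (valid for $m\ge N$) to a fixed $c=N$: given $a\in(I^{n+1}:x)\cap I^{N}$ with $n\ge N+1$, a finite induction on $m=N+1,\dots,n$, using $a\in I^{m-1}$ and $xa\in I^{n+1}\subseteq I^{m+1}$ to force $a\in I^{m}$ at each step, ends with $a\in I^{n}$.

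For the inductive step, set $\bar A=A/(x_{1},\dots,x_{s-1})$ and $\bar I=I\bar A$. By the definition of a superficial sequence and the inductive hypothesis, $x_{1},\dots,x_{s}$ is superficial iff $x_{1}^{*},\dots,x_{s-1}^{*}$ is filter-regular in $G(I)$ and $x_{s}$ is superficial for $\bar I$ in $\bar A$; by the base case applied to $(\bar A,\bar I)$ the latter is equivalent to $\overline{x_{s}}^{\,*}$ being filter-regular in $G_{\bar A}(\bar I)$. On the other side, $x_{1}^{*},\dots,x_{s}^{*}$ is filter-regular in $G(I)$ iff $x_{1}^{*},\dots,x_{s-1}^{*}$ is filter-regular and $x_{s}^{*}$ is filter-regular on $G(I)/(x_{1}^{*},\dots,x_{s-1}^{*})$. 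So, granting filter-regularity of the first $s-1$ elements, everything reduces to identifying the two ``last'' conditions: filter-regularity of $\overline{x_{s}}^{\,*}$ on $G_{\bar A}(\bar I)$ versus that of $x_{s}^{*}$ on $G(I)/(x_{1}^{*},\dots,x_{s-1}^{*})$.

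These two conditions would coincide once I produce the natural graded surjection
$$\psi\colon G(I)/(x_{1}^{*},\dots,x_{s-1}^{*})\To G_{\bar A}(\bar I)$$
and show it is an isomorphism in all large degrees. Writing $L=(x_{1},\dots,x_{s-1})$ and using the modular law, in degree $n$ the map reads $I^{n}/(I^{n+1}+\textstyle\sum_{j}x_{j}I^{n-1})\to I^{n}/(I^{n+1}+(L\cap I^{n}))$, so $\ker\psi_{n}=(I^{n+1}+(L\cap I^{n}))/(I^{n+1}+\sum_{j}x_{j}I^{n-1})$. The heart of the proof is to show $\ker\psi_{n}=0$ for $n\gg 0$, i.e. that $L\cap I^{n}\subseteq I^{n+1}+\sum_{j}x_{j}I^{n-1}$ eventually; this is the asymptotic Valabrega--Valla type condition, and it is precisely where the filter-regularity of $x_{1}^{*},\dots,x_{s-1}^{*}$ enters, through the Artin--Rees lemma used to control $L\cap I^{n}$. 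Since $\psi$ commutes with multiplication by $x_{s}^{*}$, an isomorphism in large degrees forces the two annihilators to agree in large degrees, closing the induction. I expect this high-degree comparison to be the main technical obstacle, whereas the base case is essentially bookkeeping with colon ideals.
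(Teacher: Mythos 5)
The paper gives no proof of this lemma at all---it is quoted directly from \cite[Lemma 6.2]{Tr2}---so any argument you supply is necessarily ``different from the paper''; the only question is whether yours is complete. Your base case is correct: the identification $[0:_{G(I)}x^*]_n=\bigl((I^{n+2}:x)\cap I^n\bigr)/I^{n+1}$ is right, the forward implication is immediate, and the finite bootstrapping that upgrades the moving-window condition $(I^{m+1}:x)\cap I^{m-1}=I^m$ ($m\ge N$) to the fixed modulus $c=N$ is exactly the standard trick. The reduction of the inductive step to comparing the last colon condition in $G(I)/(x_1^*,\dots,x_{s-1}^*)$ with that in $G_{\bar A}(\bar I)$ via the natural surjection $\psi$ is also set up correctly, including the logical point that one may assume filter-regularity of the first $s-1$ initial forms on both sides of the equivalence.

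The genuine gap is that the load-bearing claim---$L\cap I^n\subseteq LI^{n-1}+I^{n+1}$ for $n\gg0$, i.e.\ $\ker\psi_n=0$ in large degrees---is only asserted, with a gesture at Artin--Rees; as you yourself say, this is where all the content sits, so what you have is an outline rather than a proof. Two things are needed to close it. First, a direct Artin--Rees attack on $L\cap I^n$ for $L$ generated by several elements does not work in one step (an element $\sum_j x_ja_j$ cannot be improved coordinatewise), so you must strengthen the induction hypothesis: prove simultaneously, by induction on $k$, that (a) the lemma holds for sequences of length $k$ and (b) if $x_1^*,\dots,x_k^*$ is filter-regular then the natural map $G(I)/(x_1^*,\dots,x_k^*)\to G_{A/(x_1,\dots,x_k)}(IA/(x_1,\dots,x_k))$ is an isomorphism in all large degrees. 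Statement (b) for length $k$ follows from (b) for length $k-1$ together with the principal case of the Valabrega--Valla condition in the ring $A'=A/(x_1,\dots,x_{k-1})$. Second, that principal case must actually be proved: if $z^*$ is filter-regular in $G_B(\mathfrak b)$, then $(z)\cap\mathfrak b^n=z\mathfrak b^{n-1}$ for $n\gg0$. Here Artin--Rees gives a fixed $c$ with $(z)\cap\mathfrak b^n\subseteq z\mathfrak b^{n-c}$ for $n\ge c$; writing an element as $za$ with $a\in\mathfrak b^{n-c}$ and applying $(\mathfrak b^{m+2}:z)\cap\mathfrak b^m=\mathfrak b^{m+1}$ (valid for $m\ge N$) successively for $m=n-c,\dots,n-2$, using $za\in\mathfrak b^n\subseteq\mathfrak b^{m+2}$, pushes $a$ into $\mathfrak b^{n-1}$ once $n\ge N+c$. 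This is the same bootstrap as in your base case, and with these two paragraphs supplied your proof is complete and correct.
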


An ideal $J \subseteq I$ is called a {\em reduction} of $I$ if there exists an integer $n$ such that $I^{n+1} = JI^n$.
The least integer $n$ with this property is called the {\em reduction number} of $I$ with respect to $J$.
We will denote it by $r_J(I)$. A reduction is minimal if it is minimal with respect to containment.

\begin{Lemma} \label{generating}  
Every minimal reduction $J$ of $I$ can be generated by a superficial sequence of $I$.
\end{Lemma}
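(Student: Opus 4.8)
The plan is to produce, inside the given minimal reduction $J$, a sequence of elements whose initial forms in $G(I)$ are filter-regular; by Lemma \ref{superficial} such a sequence is exactly a superficial sequence of $I$. Write $\ell = \ell(I)$ for the analytic spread. Since the residue field is infinite, it is well known that every minimal reduction of $I$ is minimally generated by $\ell$ elements, so I would fix a minimal generating set $a_1,\dots,a_\ell$ of $J$ and look for new generators of the form $x_i = \sum_j u_{ij}a_j$ with coefficient matrix $(u_{ij}) \in A^{\ell\times\ell}$. If the reduction of this matrix modulo $\mm$ is invertible, then $x_1,\dots,x_\ell$ again generate $J$ by Nakayama, and this is a nonempty Zariski-open condition because the residue field is infinite. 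Thus it suffices to choose the coefficient matrix generically so that, in addition, $x_1^*,\dots,x_\ell^*$ is filter-regular in $G(I)$.

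The key observation is that the ideal $J^* = (a_1^*,\dots,a_\ell^*)G(I)$ generated by the initial forms has radical $G(I)_+$. Indeed, since $J$ is a reduction we have $I^{n+1} = JI^n$ for all $n \ge r_J(I)$, and passing to $G(I)$ this yields $(J^*)_{n+1} = G(I)_{n+1}$ for all such $n$; hence $J^* \supseteq G(I)_m$ for $m \gg 0$ and therefore $\sqrt{J^*} = G(I)_+$. Consequently $J^* \not\subseteq P$ for every prime $P$ with $P \not\supseteq G(I)_+$. This is precisely the input required to steer the initial forms away from the relevant associated primes, and it is the one place where the reduction hypothesis on $J$ is used.

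I would then build the $x_i$ one row at a time. Assuming $x_1,\dots,x_{i-1}$ have already been chosen with $x_1^*,\dots,x_{i-1}^*$ filter-regular, the associated-prime description of filter-regularity recalled before Lemma \ref{superficial} reduces the task to picking $x_i^*$ outside every prime in the finite set of associated primes $P$ of $(x_1^*,\dots,x_{i-1}^*)$ that do not contain $G(I)_+$. For each such $P$ we have $J^* \not\subseteq P$ by the previous paragraph, so the coefficient vectors $(\bar u_{i1},\dots,\bar u_{i\ell}) \in k^\ell$ that force $x_i^* \in P$ form a proper subspace; since $k$ is infinite, a generic row avoids all of these finitely many subspaces at once while remaining linearly independent from the preceding rows. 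After $\ell$ steps the resulting $x_1,\dots,x_\ell$ generate $J$ and form a superficial sequence. The main obstacle is exactly this simultaneous generic choice: one must guarantee that keeping the coefficient matrix invertible modulo $\mm$ (so that the $x_i$ still generate $J$) and driving each $x_i^*$ off the finitely many associated primes can be achieved together. Both are nonempty open conditions over the infinite residue field, and the standard graded prime-avoidance argument (see \cite{Tr1}) delivers a common generic choice once the radical computation $\sqrt{J^*} = G(I)_+$ is in hand.
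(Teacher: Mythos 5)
Your argument is correct and is essentially the paper's proof: both rest on the observation that, because $J$ is a reduction, the ideal of initial forms generated by $(J+I^2)/I^2$ in $G(I)$ contains all sufficiently high graded components and hence is not contained in any prime $P \not\supseteq G(I)_+$, after which prime avoidance over the infinite residue field yields a filter-regular sequence and Lemma \ref{superficial} converts it into a superficial one. The only (cosmetic) difference is the last step: you keep the new elements generating $J$ by making the change-of-generators matrix invertible modulo $\mm$, whereas the paper lifts the filter-regular generators arbitrarily, checks via Nakayama that they generate a reduction contained in $J$, and then invokes the minimality of $J$.
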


\begin{proof}
Let $Q$ denote the ideal in $G(I)$ generated by the elements $x^*$, $x \in J$. Then $Q$ is generated by $(J+I^2)/I^2$.
Since $I^{n+1} = JI^n$,  $Q_{n+1} = G(I)_{n+1}$. Therefore, $Q \not\subseteq P$ for any prime $P \not\supseteq G(I)_+$. Using prime  avoidance we can find a filter-regular sequence $z_1,...,z_s \in G(I)$ such that $Q = (z_1,...,z_s)$. 
Choose $x_i \in J$ such that $x_i^* = z_i$, $i = 1,...,s$.
Then $(x_1,...,x_s) + I^2 = J + I^2$. Hence 
$$(x_1,...,x_s)I^n + I^{n+2} = JI^n + I^{n+2} = I^{n+1}.$$
By Nakayama's Lemma, this implies $I^{n+1} = (x_1,...,x_s)I^n$. 
Therefore, $(x_1,...,x_s)$ is a reduction of $I$. By the minimality of $J$, we must have $J =(x_1,...,x_s)$. The conclusion now follows from Lemma \ref{superficial}.
\end{proof} 

The above notions can be used to characterize the Castelnuovo-Mumford regularity of $R(I)$. 
Recall that the Castelnuovo-Mumford regularity of a standard graded algebra $R$ over a commutative ring is defined as follows. 
Let $H_{R_+}^i(R)$ denote the $i$-th local cohomology module of $R$ with respect to the graded ideal $R_+$ of elements of positive degree and set $a_i(R) =  \max\{n|\ H_{R_+}^i(R)_n \neq 0\}$ with the convention $a_i(R) = -\infty$ if $H_{R_+}^i(R) = 0$. Then one defines
$$\reg R := \max\{a_i(R)+i|\ i \ge 0\}.$$ 
\par

It is known that $\reg R(I)$ can be characterized in terms of a reduction $J = (x_1,...,x_s)$ of $I$ such that 
$x_1^*,...,x_s^*$ form a filter-regular sequence in $G(I)$ \cite[Theorem 4.8]{Tr2}.  
By Lemma \ref{superficial} and Lemma \ref{generating} we can replace a filter-regular sequence in $G(I)$ by a superficial sequence of $I$ as follows.

\begin{Theorem} \label{regularity}  
Let $x_1,...,x_s$ be a superficial sequence of $I$ such that $J = (x_1,...,x_s)$ is a reduction of $I$. Then 
\begin{align*} & \reg R(I) = \reg G(I)\\
& = \min\left\{n \ge r_J(I)|\  I^{n+1} \cap [(x_1,...,x_{i-1}):x_i] = (x_1,...,x_{i-1})I^n, i = 1,...,s\right\}.
\end{align*}
Moreover, the equations in the above formula holds for all $n \ge \reg R(I)$.
\end{Theorem}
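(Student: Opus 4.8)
The plan is to deduce the statement from \cite[Theorem 4.8]{Tr2} by first matching the hypotheses and then reformulating the graded condition of that theorem into the stated ideal-theoretic one. First I would invoke Lemma \ref{superficial}: since $x_1,\dots,x_s$ is a superficial sequence of $I$, the initial forms $z_i:=x_i^*$ form a filter-regular sequence of $G(I)$, which is exactly the hypothesis under which \cite[Theorem 4.8]{Tr2} is stated. Moreover, because $J=(x_1,\dots,x_s)$ is a reduction of $I$, the computation in the proof of Lemma \ref{generating} shows $(z_1,\dots,z_s)_{n+1}=G(I)_{n+1}$ for all large $n$, so $G(I)/(z_1,\dots,z_s)$ is Artinian; this is the second hypothesis needed to run \cite[Theorem 4.8]{Tr2}. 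That theorem then delivers at once the equality $\reg R(I)=\reg G(I)$ together with a formula expressing $\reg G(I)$ as the least $n\ge r_J(I)$ beyond which each colon $[(z_1,\dots,z_{i-1}):z_i]$ agrees with $(z_1,\dots,z_{i-1})$ in the relevant degrees.

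The core of the argument is then the reformulation of this graded stabilization into the ideal equality in the statement. Writing $b=(x_1,\dots,x_{i-1})$ and unwinding the multiplication rule in $G(I)=\oplus_n I^n/I^{n+1}$, one checks that
\begin{align*}
(z_1,\dots,z_{i-1})_n &= \left(bI^{n-1}+I^{n+1}\right)/I^{n+1},\\
[(z_1,\dots,z_{i-1}):z_i]_n &= \left\{a+I^{n+1}\ \middle|\ a\in I^n,\ x_ia\in bI^n+I^{n+2}\right\},
\end{align*}
so that the graded condition $[(z_1,\dots,z_{i-1}):z_i]_n=(z_1,\dots,z_{i-1})_n$ says precisely that every $a\in I^n$ with $x_ia\in bI^n+I^{n+2}$ already lies in $bI^{n-1}+I^{n+1}$. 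I would then show that, once $n\ge r_J(I)$, this ``approximate'' condition (memberships taken modulo higher powers of $I$) is equivalent, after the evident reindexing $n\mapsto n+1$, to the ``exact'' equality $I^{n+1}\cap[b:x_i]=bI^n$. Since the reverse inclusion $bI^n\subseteq I^{n+1}\cap[b:x_i]$ is automatic because $x_i\in I$, only the inclusion $I^{n+1}\cap[b:x_i]\subseteq bI^n$ carries content.

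The step I expect to be the main obstacle is exactly this passage between the exact colon ideal $I^{n+1}\cap[b:x_i]$ and the graded colons of the filter-regular sequence. The subtlety is twofold: one must remove the correction terms $I^{n+2}$ and $I^{n+1}$ from the approximate condition, and one must account for the fact that the single ideal equality should encode the graded stabilization in \emph{all} degrees beyond $n$, not merely at one degree. Both points I would handle using the reduction identity $I^{m+1}=JI^m$ for $m\ge r_J(I)$ together with the filter-regularity of the \emph{whole} sequence $z_1,\dots,z_s$: when a tail $x_ia$ is expanded via $J=(x_1,\dots,x_s)$, the summands indexed by $j>i$ do not lie in $b$ and are controlled only by invoking filter-regularity of the later $z_j$, not of $z_i$ over $z_1,\dots,z_{i-1}$ alone. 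Once this equivalence is established for each $i$, comparing the threshold furnished by \cite[Theorem 4.8]{Tr2} with the ideal conditions identifies the two minima; the floor $n\ge r_J(I)$ itself records the contribution of $H^0_{G(I)_+}(G(I))$, whose top nonzero degree is governed by the reduction number.
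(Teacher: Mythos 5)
Your proposal matches the paper's treatment: the paper gives no proof of this theorem beyond observing, via Lemma \ref{superficial}, that a superficial sequence is exactly one whose initial forms are filter-regular in $G(I)$, and then citing \cite[Theorem 4.8]{Tr2}, whose conclusion is already stated in the ideal-theoretic form $I^{n+1}\cap[(x_1,\dots,x_{i-1}):x_i]=(x_1,\dots,x_{i-1})I^n$ appearing here. The additional translation you sketch between graded colons in $G(I)$ and exact colon ideals is thus absorbed into the cited theorem rather than carried out in the paper, but the reduction you describe --- Lemma \ref{superficial} to match hypotheses, plus the reduction property of $J$ to ensure $(x_1^*,\dots,x_s^*)$ generates $G(I)$ in large degrees --- is precisely the paper's argument.
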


Theorem \ref{regularity} will play a crucial role in our paper. 
It can be also used to compute $\reg F(I)$ as we shall see later. 

\begin{Corollary}  \label{intersection} 	
Let $x$ be a superficial element of $I$. Then
$I^{n+1} \cap (x) = xI^n$ for $n \ge \reg R(I)$.
\end{Corollary}

\begin{proof}  
Using Lemma \ref{superficial} we can extend $x_1 := x$ to a superficial sequence $x_1,...,x_s$ of $I$ such that $J = (x_1,...,x_s)$ is a reduction of $I$, $s \ge 2$. By Theorem \ref{regularity}, we have 
$I^{n+1} \cap [(x_1):x_2] = x_1I^n$ for $n \ge \reg R(I)$.
Since 
$$I^{n+1} \cap [(x_1):x_2] \supseteq I^{n+1} \cap  (x_1) \supseteq  x_1I^n,$$
we obtain $I^{n+1} \cap (x_1) = x_1I^n$ for $n \ge \reg R(I)$. 
\end{proof}

Corollary \ref{intersection}  leads us to the following property of colon ideals.

\begin{Proposition} \label{colon} 	
Let $I$ be a regular ideal. Then 
$I^{n+1} : I = I^n$ for $n \ge \reg R(I)$.
\end{Proposition}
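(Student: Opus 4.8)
The plan is to prove the two inclusions separately, with the nontrivial one resting on Lemma \ref{intersection} applied to a well-chosen superficial element. The inclusion $I^n \subseteq I^{n+1}:I$ is immediate, since $I\cdot I^n = I^{n+1}$. The substance of the proposition lies in the reverse inclusion $I^{n+1}:I \subseteq I^n$ for $n \ge \reg R(I)$.

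For the reverse inclusion, I would first secure a superficial element $x$ of $I$ that is simultaneously a non-zerodivisor of $A$. Granting such an $x$, take any $a \in I^{n+1}:I$. Then $ax \in I^{n+1}$, and trivially $ax \in (x)$, so $ax \in I^{n+1} \cap (x)$. By Lemma \ref{intersection}, for $n \ge \reg R(I)$ this intersection equals $xI^n$, whence $ax = xb$ for some $b \in I^n$. Thus $x(a-b) = 0$, and since $x$ is a non-zerodivisor we conclude $a = b \in I^n$, as desired.

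The main obstacle, and the only point requiring genuine care, is to guarantee the existence of a superficial element of $I$ that is also a non-zerodivisor. This is precisely where the hypothesis that $I$ is regular enters: because $I$ contains a non-zerodivisor, $I$ is not contained in any associated prime of the zero ideal of $A$. Recall that we may assume the residue field of $A$ is infinite, so superficial elements correspond to sufficiently general elements of $I$, namely those whose class in $I/I^2$ avoids the relevant associated primes of $G(I)$ (those not containing $G(I)_+$); being a non-zerodivisor amounts to $x$ avoiding the finitely many primes in $\Ass A$. Since $I \not\subseteq P$ for each $P \in \Ass A$, prime avoidance together with the genericity of the superficiality condition over an infinite field lets us pick a single element $x \in I$ meeting both requirements at once. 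This produces the required superficial non-zerodivisor and completes the argument.
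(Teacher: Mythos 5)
Your proof is correct and follows essentially the same route as the paper: both reduce the statement to $I^{n+1}:x = I^n$ for a superficial non-zerodivisor $x$ via Lemma \ref{intersection}. The only cosmetic difference is that the paper simply cites the known fact that \emph{every} superficial element of a regular ideal is a non-zerodivisor, whereas you construct one such element by a prime-avoidance/genericity argument, which is also fine.
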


\begin{proof}
It is well-known that if $I$ is a regular ideal, every superficial element of $I$ is a non-zerodivisor (see e.g. \cite[Lemma 1.2]{RV}). Therefore, $0:x = 0$ if $x$ is a superficial element for $I$.  
By Corollary \ref{intersection}, $I^{n+1}:x = I^n + (0:x) = I^n$ for $n \ge \reg R(I)$. 
Since $I^n \subseteq I^{n+1}:I \subseteq I^{n+1}:x$, this implies $I^{n+1}:I = I^n$ for $n \ge \reg R(I)$.
\end{proof}

\begin{Remark} \label{a}
{\rm Corollary \ref{intersection} can be also deduced from \cite[Lemma 4.4 (i)]{Tr2}, whose proof shows that $I^{n+1} \cap (x) = xI^n$ for $n \ge \max\{a_0(G(I)),a_1(G(I))+1\}$. If $I$ is a regular ideal, $a_0(G(I)) < a_1(G(I))$ by \cite[Theorem 5.2]{Ho1}. Therefore, $I^{n+1} : I = I^n$ for $n \ge a_1(G(I))+1$.}
\end{Remark}

Recall that the Ratliff-Rush closure of $I$ is defined by
$$\tilde I := \bigcup_{n \ge 1} I^{n+1}:I^n.$$
Let $s(I)$ denotes the least integer $m \ge 0$ such that $I^{n+1}:I^n = \tilde I$ for all $n \ge m$.  

\begin{Theorem} \label{stable}
Let $I$ be a regular ideal. Then $s(I) \le \max\{\reg R(I)-1,0\}$.
\end{Theorem}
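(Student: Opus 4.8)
The plan is to exploit the non-decreasing chain of colon ideals together with Proposition~\ref{colon}, which controls a single colon by $I$. Writing $r = \reg R(I)$, I would first observe that the ideals $I^{n+1}:I^n$ form an increasing chain: if $yI^n \subseteq I^{n+1}$ then $yI^{n+1} = (yI^n)I \subseteq I^{n+2}$, so $I^{n+1}:I^n \subseteq I^{n+2}:I^{n+1}$. Hence $\tilde I = \bigcup_{n\ge 1} I^{n+1}:I^n$ is the stable value of this chain and $I^{n+1}:I^n \subseteq \tilde I$ for every $n$. Consequently it suffices to prove the reverse inclusion $\tilde I \subseteq I^{n+1}:I^n$, equivalently $\tilde I\cdot I^n \subseteq I^{n+1}$, for all $n \ge \max\{r-1,0\}$.

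The heart of the argument is an iteration of Proposition~\ref{colon}. For any $N$ and any $t \ge 1$ one has the elementary factorization $I^N:I^t = (I^N:I):I^{t-1}$, while Proposition~\ref{colon} gives $I^{m+1}:I = I^m$ whenever $m \ge r$. Applying this $t$ times, I would establish that
$$I^{n+t+1}:I^t = I^{n+1} \quad \text{for } n \ge r-1,$$
since at the $j$-th step the relevant exponent $n+t+1-j$ stays $\ge r$ precisely when $n \ge r-1$, the binding constraint occurring at the final step, where the exponent equals $n+1$.

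Now fix $n \ge r-1$ and $y \in \tilde I$. Because the chain above is increasing, $y \in I^{m+1}:I^m$ for all large $m$, so choosing $t$ large enough that $n+t$ lies in this range gives $yI^{n+t} \subseteq I^{n+t+1}$. For an arbitrary $a \in I^n$ this yields $(ya)I^t \subseteq yI^{n+t} \subseteq I^{n+t+1}$, that is $ya \in I^{n+t+1}:I^t = I^{n+1}$ by the displayed identity. As $a$ and $y$ are arbitrary, $\tilde I\cdot I^n \subseteq I^{n+1}$, which is the desired inclusion. Combined with the automatic inclusion $I \subseteq \tilde I$ to handle $n=0$ in the case $r \le 1$, this gives $s(I) \le \max\{r-1,0\}$.

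I expect the main obstacle to be the bookkeeping in the iterated colon computation: one must verify that each of the $t$ applications of Proposition~\ref{colon} is legitimate, i.e. that every intermediate exponent remains at least $r$, and confirm that the binding constraint is exactly $n \ge r-1$ rather than something stronger. Once this index analysis is settled, the remainder follows directly from the regularity of $I$, which is what guarantees that $yI^{n+t} \subseteq I^{n+t+1}$ eventually holds, and from the elementary factorization of colon ideals.
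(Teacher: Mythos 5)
Your proof is correct and rests on the same mechanism as the paper's: Proposition~\ref{colon} combined with the factorization $I^N:I^t=(I^N:I):I^{t-1}$, with the binding constraint $n+1\ge\reg R(I)$ correctly identified. The paper's version is just a compressed form of your iteration --- it shows $I^{n+1}:I^n=I^n:I^{n-1}$ for $n\ge\reg R(I)$ and concludes that the (increasing) chain stabilizes from $\reg R(I)-1$ on, so its union $\tilde I$ equals the stable value.
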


\begin{proof}
Applying  Proposition \ref{colon} we have 
$$I^{n+1} : I^n = (I^{n+1}:I) : I^{n-1} = I^n:I^{n-1}$$
for $n\ge \reg R(I)$. Thus, $I^{n+1}:I^n$ is the same ideal for all $n \ge \reg R(I)-1$ and equals $\tilde I$.
\end{proof}

There are plenty examples with $s(I) = 0$ and $\reg R(I)$ arbitrarily large. 
For instance, if $I = \mm$ then $s(\mm) = 0$. Since $\reg R(\mm) \ge r_J(\mm)$ for any minimal reduction $J$ of $\mm$, one can easily construct local rings such that $\reg R(\mm)$ is arbitrarily large. \par

By Theorem \ref{stable}, if we know an upper bound $c$ of
$\reg R(I)$, we can easily compute $\tilde I$ because $\tilde I = I^{c+1}:I^c$. 
There are several upper bounds for $\reg R(I)$ in terms of other invariants \cite{DGV, DH, Li1, Li2, RTV1,St,Va}. 

\begin{Corollary}  \label{extended}
Let $I$ be an $\mm$-primary ideal in a Cohen-Macaulay ring $A$ and $d = \dim A$. 
Let $e(I)$ be the the multiplicity $e(I)$ of $I$. Then \par
{\rm (i)}  $s(I) \le e(I)-1$ if $d = 1$,\par
{\rm (ii)}  $s(I) \le  e(I)^{2(d-1)!-1}[e(I)-1]^{(d-1)!}$ if $d \ge 2$.
\end{Corollary}

\begin{proof}
It was proved in \cite[Corollary 3.4]{RTV1} that the right-hand sides of the bounds in (i) and (ii) are upper bounds for  $\reg R(I)$ in the case $I = \mm$.  However, it can be checked that the proof also holds for an arbitrary $\mm$-primary ideal $I$ (cf. \cite[Corollary 4.4]{Li1}, where a more compact but weaker bound for $\reg R(I)$ is given). Therefore, the assertion follows from Theorem \ref{stable}.
\end{proof}

A similar upper bound for $s(I)$ was already given for this case by Elias \cite[Theorem 2.1(ii)]{El}, which is worse than Corollary \ref{extended}. This bounds are deduced from a bound for $s(I)$ in terms of the postulation numbers of $I$ and of ideals of the form $I/(x)$, where $x$ is an element of a given superficial sequence of $I$ generating a minimal reduction of $I$ \cite[Theorem 2.1(i)]{El}.  

%%%%%%%%%%%%%%%%%%%%%%%%%%%%%%%%%%%%%

\section{Ratliff-Rush filtration}

Let $(A,\mm)$ be a local ring with $\dim A > 0$ and $I$ an ideal of $A$.
One calls the sequence of ideals $\widetilde {I^n}$, $n \ge 1$, the {\em Ratliff-Rush filtration} with respect to $I$.
It is well known that for $n \ge 1$,
$$\widetilde {I^n} = \bigcup_{t \ge 0}I^{n+t}:I^t.$$

To compute $\widetilde {I^n}$, we need to know an upper bound for the least number $t$ with $\widetilde {I^n} = I^{n+t}:I^t$. The following result gives an upper bound for such $t$ in terms of $\reg R(I)$. Moreover, we also obtain an upper bound for the stability index $s^*(I)$, which is the least integer $m \ge 1$ such that $\widetilde {I^n} = I^n$ for $n \ge m$.  

\begin{Proposition} \label{bound}
Let $I$ be a regular ideal. Then \par
{\rm (i)} $\widetilde {I^n} = I^{n+t}:I^t$ for $t \ge \reg R(I)-n$,\par
{\rm (ii)} $s^*(I) \le \max\{\reg R(I),1\}$.
\end{Proposition}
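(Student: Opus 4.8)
The plan is to derive both statements directly from Proposition~\ref{colon}, which asserts that $I^{n+1}:I = I^n$ for $n \ge \reg R(I)$, combined with the description $\widetilde{I^n} = \bigcup_{t \ge 0}I^{n+t}:I^t$ recalled at the start of this section. The first thing I would check is that the colon ideals $I^{n+t}:I^t$ form an ascending chain in $t$: if $xI^t \subseteq I^{n+t}$, then $xI^{t+1} \subseteq I^{n+t}I = I^{n+t+1}$, so $I^{n+t}:I^t \subseteq I^{n+t+1}:I^{t+1}$. Consequently $\widetilde{I^n}$ is the union of an increasing chain of ideals, and to prove (i) it suffices to show that this chain becomes stationary once $t \ge \reg R(I)-n$, its stable value then being forced to equal the union.

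For part (i), write $r = \reg R(I)$ and use the elementary colon identity $(J:I):I^t = J:I^{t+1}$ with $J = I^{n+t+1}$, which gives $I^{n+t+1}:I^{t+1} = (I^{n+t+1}:I):I^t$. When $t \ge r-n$ we have $n+t \ge r$, so Proposition~\ref{colon} applies and yields $I^{n+t+1}:I = I^{n+t}$; substituting, we obtain $I^{n+t+1}:I^{t+1} = I^{n+t}:I^t$. Thus consecutive terms of the chain coincide for every $t \ge r-n$, so the chain is constant there, and being the union of an increasing family it equals $\widetilde{I^n}$ throughout that range.

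Part (ii) should then be immediate by specializing (i) to $t = 0$, which is legitimate precisely when $0 \ge r-n$, that is when $n \ge r$. For such $n$ (and $n \ge 1$, as required for the filtration) part (i) gives $\widetilde{I^n} = I^n:I^0 = I^n$, so $\widetilde{I^n} = I^n$ holds for all $n \ge \max\{r,1\}$, which is exactly the bound $s^*(I) \le \max\{\reg R(I),1\}$.

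I do not expect a genuine obstacle: all the analytic content is packaged into Proposition~\ref{colon}, and what remains is the bookkeeping of the increasing chain together with the index comparison $t \ge r-n$. The only point requiring a little care is the endpoint of the stabilization—matching the inequality $n+t \ge r$ against the hypothesis of Proposition~\ref{colon}—so that the threshold comes out as $t \ge \reg R(I)-n$ and not off by one.
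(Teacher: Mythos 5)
Your proof is correct and follows essentially the same route as the paper: both rest on Proposition~\ref{colon} and the identity $I^{n+t+1}:I^{t+1} = (I^{n+t+1}:I):I^t = I^{n+t}:I^t$ for $t \ge \reg R(I)-n$, then set $t=0$ for part (ii). Your explicit remark that the colon ideals form an ascending chain (so that the stabilized value equals the union $\widetilde{I^n}$) is a small point the paper leaves implicit, but it is the same argument.
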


\begin{proof}
By Proposition \ref{colon} we have $I^{n+1}:I = I^n$ for $n \ge \reg R(I)$.
Therefore, 
$$I^{n+t+1}: I^{t+1} = (I^{n+t+1}:I):I^t = I^{n+t}: I^t$$
for $t \ge \reg R(I)-n$, which proves (i). 
If $n \ge \reg R(I)$, we can put $t = 0$ in (i). Hence 
$\widetilde {I^n} = I^n:I^0 = I^n$, which proves (ii). 
\end{proof}

As pointed out in Remark \ref{a}, we can replace $\reg R(I)$ by $a_1(G(I))+1$ in Proposition \ref{bound}. 
So we can recover the bound  $s^*(I) \le \max\{a_1(G(I))+1,1\}$ given by Puthenpurakal in \cite[Theorem 4.3]{Pu}. Note that Puthenpurakal considers the least integer $m \ge 0$ such that $\widetilde {I^n} = I^n$ for $n \ge m$,  whereas we require $m \ge 1$ because one always has $\widetilde {I^0} = I^0 = A$. Proposition \ref{bound}(ii) can be also deduced from this bound because $a_1(G(I))+1 \le \reg G(I) = \reg R(I)$ by Theorem \ref{regularity}.  \par

Recall that a system of elements $x_1,...,x_r$ in $A$  is a {\em $d$-sequence} if the following two conditions are satisfied:\par
(i) $x_i$ is not contained in the ideal generated by the rest of the system, $i = 1,...,r$,\par
(ii) $(x_1,...,x_i):x_{i+1}x_k = (x_1,...,x_i):x_{i+1}$ for all $i = 0,...,r-1$ and $k = i+1,...,r$. \par
\noindent This notion was introduced by Huneke in \cite{Hun}. 
Examples of $d$-sequences are abundant such as the maximal minors of an $r  \times (r+ 1)$ generic matrix and systems of parameters in Buchsbaum rings. \par

It was known that $I$ is generated by a $d$-sequence if and only if $\reg R(I) = 0$ \cite[Corollary 5.7]{Tr2}. By Proposition \ref{bound}(ii), $\reg R(I) = 0$ implies $s^*(I) = 1$. This means $\widetilde {I^n} = I^n$ for all $n \ge 1$. So we obtain the following consequence.

\begin{Corollary} \label{d-sequence}
Let $I$ be a regular ideal generated by a $d$-sequence. Then $\widetilde {I^n} = I^n$ for all $n \ge 1$. 
\end{Corollary}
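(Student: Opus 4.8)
The plan is to derive Corollary~\ref{d-sequence} directly from Proposition~\ref{bound}~(ii) together with the characterization of Rees-algebra regularity for ideals generated by $d$-sequences. The statement to prove is that $\widetilde{I^n} = I^n$ for all $n \ge 1$, which by definition of the Ratliff-Rush regularity is exactly the assertion that $s^*(I) = 1$. So the whole task reduces to showing $s^*(I) \le 1$, since $s^*(I) \ge 1$ holds trivially (the index is required to be $\ge 1$).

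First I would invoke the cited result \cite[Corollary 5.7]{Tr2}, stated in the paragraph just before the corollary, which says that $\reg R(I) = 0$ if and only if $I$ is generated by a $d$-sequence. Since by hypothesis $I$ is generated by a $d$-sequence, this gives $\reg R(I) = 0$ immediately. Then I would apply Proposition~\ref{bound}~(ii), which requires $I$ to be a regular ideal (this is part of our hypothesis) and yields
$$s^*(I) \le \max\{\reg R(I), 1\} = \max\{0, 1\} = 1.$$
Combining $s^*(I) \le 1$ with the definitional lower bound $s^*(I) \ge 1$ forces $s^*(I) = 1$, which by the definition of $s^*$ means precisely that $\widetilde{I^n} = I^n$ for all $n \ge 1$.

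There is essentially no obstacle here: the corollary is a one-line consequence obtained by plugging $\reg R(I) = 0$ into the bound. The only point demanding a little care is making sure the hypotheses of Proposition~\ref{bound} are met, namely that $I$ is regular — which is assumed — so that the Ratliff-Rush filtration stabilizes and the colon manipulations in Proposition~\ref{colon} (on which the bound rests) are valid. I would therefore write the proof as a direct citation chain: regularity of $I$ plus the $d$-sequence hypothesis give $\reg R(I) = 0$ via \cite[Corollary 5.7]{Tr2}, and the bound of Proposition~\ref{bound}~(ii) then collapses to $s^*(I) = 1$, which is the claimed equality $\widetilde{I^n} = I^n$ for all $n \ge 1$.
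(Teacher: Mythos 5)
Your proposal is correct and is exactly the argument the paper itself uses: it cites \cite[Corollary 5.7]{Tr2} to get $\reg R(I)=0$ for an ideal generated by a $d$-sequence and then applies Proposition~\ref{bound}~(ii) to conclude $s^*(I)=1$. No differences worth noting.
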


It is also known that $\widetilde {I^n} = I^n$ for all $n \ge 1$ if and only if $G(I)$ contains a non-zerodivisor of positive degree \cite[(1.2)]{HLS}. This fact can be used to construct examples with $s^*(I) = 1$ and $\reg R(I)$ arbitrarily large. 

\begin{Example} \label{example}
{\rm Let $R = k[X]/P$, where $k[X]$ is a polynomial ring and $P$ is a homogeneous prime generated by forms of degree $d$.   Let $A$ be the localization of $R$ at its maximal graded ideal. Then $G(\mm) \cong R$. Since $\depth R > 0$, $s^*(\mm) = 1$.  By Theorem \ref{regularity}, $\reg R(\mm) = \reg G(\mm) = \reg R$. Since $\reg R+1$ is an upper bound for the degree of the generators of $P$ \cite{EG}, $\reg R(\mm) \ge d-1$.}
\end{Example}

Despite the possible large difference between $\reg R(I)$ and $s^*(I)$ we can characterize $\reg R(I)$ in terms of $s^*(I)$ in the following case. \par

Recall that $A$ is called a {\em Buchsbaum ring} if every system of parameters $x_1,...,x_r$ of $A$ is a {\em weak sequence}, i.e. 
$$(x_1,...,x_{i-1}): x_i = (x_1,...,x_{i-1}):\mm$$
for $i = 1,...,r$. \par

Huneke showed that $A$ is a Buchsbaum ring if and only if every system of parameters forms a $d$-sequence \cite[Proposition 1.7]{Hun}. Therefore, $\reg R(I) = 0$ and $s^*(I) = 1$ if $I$ is a parameter ideal in a Buchsbaum ring. 
If $I$ is not a parameter ideal, we have the following formula for $\reg R(I)$, which is not known even in the case $A$ is a regular local ring.

\begin{Theorem} \label{equality}
Let $A$ be a two-dimensional Buchsbaum ring with $\depth A > 0$. 
Let $I$ be an $\mm$-primary ideal, which is not a parameter ideal. 
Let $J$ be a minimal reduction of $I$. Then  
$$\reg R(I) = \max\{r_J(I), s^*(I)\}  = \min\{n \ge r_J(I)|\ \widetilde {I^n} = I^n\}.$$
\end{Theorem}

\begin{proof}
By Theorem \ref{regularity}, $\reg R(I) \ge r_J(I)$. 
Since $I$ is not a parameter ideal, $r_J(I) \ge 1$. Hence, $\reg R(I) \ge 1$.
By Proposition \ref{bound} (ii), this implies $\reg R(I) \ge s^*(I)$. 
Thus, $\reg R(I) \ge  \max\{r(I),s^*(I)\}.$
Since 
$$\max\{r_J(I),s^*(I)\} \ge \min\{n \ge r_J(I)|\ \widetilde {I^n} = I^n\},$$
it suffices to show that $\reg R(I) \le \min\{n \ge r_J(I)|\ \widetilde {I^n} = I^n\}.$\par

By Lemma \ref{generating}, there is a superficial sequence $x,y$ of $I$ such that $J = (x,y)$.
Since $\depth A > 0$, $I$ is a regular ideal. Hence $0:x = 0$.
By Theorem \ref{regularity}, this implies 
$$\reg R(I) = \min\{n \ge r_J(I)|\ I^{n+1} \cap [(x):y] = xI^n\}.$$
We will show that $I^{n+1} \cap [(x):y] = I^{n+1} \cap (x)$ for $n \ge r_J(I)$.
Let $f$ be an arbitrary element of $I^{n+1} \cap [(x):y]$. Since $I^{n+1} = (x,y)I^n$,
there are elements $g,h \in I^n$ such that $f = gx +hy$.
Since $fy \in (x)$, $h \in (x):y^2$.
Since $x,y$ is a $d$-sequence, $(x):y^2 = (x):y$.
Hence $hy \in y[(x):y^2] \subseteq y[(x):y] \subseteq (x)$.
This implies  $f  \in (x)$. So we can conclude that $I^{n+1} \cap [(x):y] \subseteq I^{n+1} \cap (x)$. 
Since the converse inclusion is obvious, $I^{n+1} \cap [(x):y] = I^{n+1} \cap (x)$. 
Therefore, 
\begin{align*}
\reg R(I) & = \min\{n \ge r_J(I)|\ I^{n+1} \cap (x) = xI^n\}\\
& = \min\{n \ge r_J(I)|\ I^{n+1}:x = I^n\}.
\end{align*}

We have $I^n \subseteq I^{n+1}: x \subseteq \widetilde {I^{n+1}}:x$.
By \cite[Lemma 3.1 (5)]{RV}, $\widetilde {I^{n+1}}:x = \widetilde {I^n}$.
Therefore,  $I^{n+1}:x = I^n$ if $\widetilde {I^n} = I^n$. So we can conclude that  
$$\reg R(I) \le \min\{n \ge r_J(I)|\ \widetilde {I^n} = I^n\}.$$ 
\end{proof}

The formula $\reg R(I) =  \min\{n \ge r_J(I)|\ \widetilde {I^n} = I^n\}$ provides an effective tool for the computation of $\reg R(I)$ because we only need  to check the condition $\widetilde {I^n} = I^n$ successively for $n \ge r_J(I)$. Moreover, comparing to Theorem \ref{regularity}, we do not need a superficial sequence which generates a reduction of $I$. To find such a sequence is not easy in general. 
 
\begin{Remark}
{\rm The proof of Theorem \ref{equality} leads us to raise the question of whether $\reg R(I) =  \min\{n \ge r_J(I)|\ I^{n+1}:I = I^n\}$. We could not find any counter-example to this formula. }
\end{Remark} 

Rossi and Swanson \cite[Section 4]{RS} asked whether $s^*(I) \le r_J(I)$ always holds. 
Under the assumption of Theorem \ref{regularity}, a positive answer would imply $r_J(I) = \reg R(I)$
for all minimal reductions $J$ of $I$.  
However, Huckaba \cite[Example 3.1]{Huc} showed that $r_J(I)$ depends on the choice of $J$.
So we obtain a negative answer to the above question.\par
 
On the other hand, we can give the following simple condition for the invariance of the reduction numbers.

\begin{Corollary} \label{invariance}
Let $A$ be a two-dimensional Buchsbaum ring with $\depth A > 0$ and $I$ an $\mm$-primary ideal.
If there exists a minimal reduction $J$ of $I$ such that $s^*(I) < r_J(I)$, 
then the reduction numbers of all minimal reductions of $I$ equal $\reg R(I)$.
\end{Corollary}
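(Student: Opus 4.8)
The plan is to derive Corollary \ref{invariance} directly from Theorem \ref{equality}, which already expresses $\reg R(I)$ as $\max\{r_J(I),s^*(I)\}$ for \emph{every} minimal reduction $J$, together with the hypothesis $s^*(I) < r_J(I)$ for one particular $J$. The key observation is that $s^*(I)$ is an invariant of the ideal $I$ alone: it does not refer to any reduction, since it is defined as the least $m\ge 1$ with $\widetilde{I^n}=I^n$ for all $n\ge m$. By contrast, the quantity $r_J(I)$ a priori depends on the chosen $J$. So the entire argument hinges on playing these two roles against each other.

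First I would fix the given minimal reduction $J$ with $s^*(I) < r_J(I)$. Applying Theorem \ref{equality} to this $J$ yields $\reg R(I) = \max\{r_J(I),s^*(I)\} = r_J(I)$, the last equality coming from the strict inequality $s^*(I)<r_J(I)$. Next I would let $J'$ be an arbitrary minimal reduction of $I$. Since $J$ was a minimal reduction and $I$ is not a parameter ideal, the same is true structurally for $J'$, so Theorem \ref{equality} applies verbatim to $J'$ and gives $\reg R(I) = \max\{r_{J'}(I),s^*(I)\}$. The crucial point is that the left-hand side, $\reg R(I)$, is an invariant of $I$ that does not depend on the choice of minimal reduction, so combining the two applications yields $\max\{r_{J'}(I),s^*(I)\} = r_J(I) = \reg R(I)$.

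It then remains to extract $r_{J'}(I) = \reg R(I)$ from $\max\{r_{J'}(I),s^*(I)\} = \reg R(I)$. Here I would use that $s^*(I) < r_J(I) = \reg R(I)$, so $s^*(I)$ is strictly below the value of the maximum; hence the maximum must be attained by the other term, forcing $r_{J'}(I) = \reg R(I)$. Since $J'$ was arbitrary, every minimal reduction has reduction number equal to $\reg R(I)$, which is precisely the claimed invariance.

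I do not expect any serious obstacle, as this is a formal consequence of Theorem \ref{equality} once one recognizes that $s^*(I)$ is reduction-independent while the formula $\reg R(I)=\max\{r_J(I),s^*(I)\}$ holds uniformly in $J$. The only point requiring a moment's care is confirming that Theorem \ref{equality} genuinely applies to \emph{every} minimal reduction $J'$ and not merely to the distinguished $J$; this is immediate because the hypotheses of Theorem \ref{equality} ($A$ two-dimensional Buchsbaum with $\depth A>0$, and $I$ an $\mm$-primary non-parameter ideal) are intrinsic to $A$ and $I$ and involve no choice of reduction, so the arbitrary-$J$ clause in its statement does the work.
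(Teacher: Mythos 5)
Your argument is correct and is essentially the paper's own proof: both apply Theorem \ref{equality} to the distinguished $J$ to get $\reg R(I)=r_J(I)$ and then to an arbitrary minimal reduction $J'$, exploiting that $s^*(I)$ and $\reg R(I)$ are independent of the choice of reduction (the paper phrases the final step as a contradiction, you argue directly, which is a cosmetic difference). The one detail you assert rather than derive is that $I$ is not a parameter ideal, which is needed to invoke Theorem \ref{equality}; it follows immediately from the hypothesis, since $r_J(I)>s^*(I)\ge 1>0$ while a parameter ideal has reduction number $0$ --- this is exactly the one-line observation with which the paper's proof begins.
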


\begin{proof}
Since $r_J(I) > 1$, $I$ is not a parameter ideal.
By Theorem \ref{equality}, the assumption implies $\reg R(I) = r_J(I)$.
If there is a minimal reduction $J'$ of $I$ with $r_{J'}(I) \neq r_J(I)$,  
from the formula $\reg R(I) = \max\{r_{J'}(I),s^*(I)\}$ we can deduce that $\reg R(I) = s^*(I)$, a contradiction.
\end{proof}

One can easily find examples, where $r_J(I)$ is arbitrarily larger than $s^*(I)$.  
For instance, let $A$ be a  local ring as in Example \ref{example}. 
Then $s^*(\mm) = 1$ while $\reg R(\mm)$ can be arbitrarily large.
If we choose $A$ to be a two-dimensional Buchsbaum ring, then $r_J(\mm) = \reg R(\mm)$ by Theorem \ref{equality}.  

\begin{Remark}  \label{question}
{\rm Let $br(I)$ denote the big reduction number of $I$ which is defined by
$$br(I) := \max\{r_J(I)|\ J \text{ is a minimal reduction of } I\}.$$ 
If $br(I) \ge s^*(I)$, then $\reg R(I) = br(I)$ by Theorem \ref{equality}.
Under the assumption of Theorem \ref{equality}, we could not find any example with $br(I) < s^*(I)$. 
So we raise the question whether $\reg R(I) = br(I)$ always holds in this case.}
\end{Remark}

In the following we will give an alternative formula for $\reg R(I)$, which involves only the Ratliff-Rush closure of a power of $I$. This is based on the following observation.

\begin{Lemma} \label{reduction}
Let $A$ be a two-dimensional Buchsbaum ring with $\depth A > 0$ and $I$ an $\mm$-primary ideal.  
Let $x,y$ be a superficial sequence of $I$ such that $J = (x,y)$ is a minimal reduction of $I$.
Set $r = r_J(I)$. For $n \ge r$, we have
$$I^{n+1}: x = I^n + y^{n-r}(I^{r+1}:x).$$
\end{Lemma}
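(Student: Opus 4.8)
The plan is to prove the identity $I^{n+1}:x = I^n + y^{n-r}(I^{r+1}:x)$ for $n \ge r$ by induction on $n$, where the base case $n = r$ is just the statement $I^{r+1}:x = I^r + (I^{r+1}:x)$, which holds trivially since $I^r \subseteq I^{r+1}:x$. The reduction $I^{n+1} = JI^n = (x,y)I^n$ will be the main engine, together with the $d$-sequence property of $x,y$ (which holds because a superficial sequence generating a minimal reduction in a two-dimensional Buchsbaum ring forms a $d$-sequence, as systems of parameters are $d$-sequences in Buchsbaum rings). The inclusion $\supseteq$ should be the easy direction: clearly $I^n \subseteq I^{n+1}:x$, and for the second summand one checks that $y^{n-r}(I^{r+1}:x)\cdot x = y^{n-r}\cdot x(I^{r+1}:x) \subseteq y^{n-r}I^{r+1} \subseteq I^{n+1}$, using $y \in I$ so that $y^{n-r}I^{r+1} \subseteq I^{n+1}$.

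For the nontrivial inclusion $\subseteq$, assuming the formula holds for $n-1$ (with $n > r$), I would take an arbitrary $f \in I^{n+1}:x$, so $fx \in I^{n+1} = (x,y)I^n$. Then I can write $fx = gx + hy$ for some $g \in I^n$ and $h \in I^n$. Replacing $f$ by $f-g$, which is harmless since $g \in I^n \subseteq I^{n+1}:x$, I may assume $fx = hy$ with $h \in I^n$. The goal is then to show $f \in I^n + y^{n-r}(I^{r+1}:x)$. From $fx = hy$ I get $hy \in (x)$, hence $h \in (x):y = (x):y^2$ (again using the $d$-sequence property, exactly as in the proof of Theorem \ref{equality}). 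Now the key step is to peel off one factor of $y$: since $h \in I^n$ and $h \in (x):y$, I want to pass to the statement for $n-1$. Writing $hy = fx$ and using $h \in (x):y$, one produces an element in $I^n:x$ of the form needed; by the inductive hypothesis $I^n:x = I^{n-1} + y^{n-1-r}(I^{r+1}:x)$, and multiplying the $y^{n-1-r}(I^{r+1}:x)$ part by $y$ gives the desired $y^{n-r}(I^{r+1}:x)$ term.

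The main obstacle I anticipate is making the descent from level $n$ to level $n-1$ precise: the relation $fx = hy$ couples $f$ and $h$, and I must extract from $h \in I^n \cap [(x):y]$ a clean statement at the previous level without losing track of which summand each piece lands in. Concretely, the delicate point is showing that the element $h$ (or a suitable modification of it after subtracting an $I^n$-term) lies in $I^n:x$ with its $y$-exponent reduced by one, so that the inductive hypothesis applies and re-multiplication by $y$ lands the contribution in $y^{n-r}(I^{r+1}:x)$ rather than in a larger or smaller module. This will require repeatedly invoking $(x):y^k = (x):y$ for all $k \ge 1$ (the full strength of the $d$-sequence condition for the pair $x,y$), and carefully using $h \in I^n = (x,y)I^{n-1}$ to rewrite $h$ in terms of lower powers. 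I expect the argument to close once $h$ is decomposed via the reduction equation and the colon-stability of the $d$-sequence is applied at each stage.
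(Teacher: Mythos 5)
Your plan is correct and takes essentially the same route as the paper: the paper proves the one-step identity $I^{n+1}:x = I^n + y(I^n:x)$ for $n\ge r+1$ (using $I^n\cap[(x):y]=I^n\cap(x)=x(I^n:x)$, established in the proof of Theorem \ref{equality} via the $d$-sequence property $(x):y^2=(x):y$) and then iterates it, which is exactly your induction. The only step to make fully explicit is the one you flag as delicate: from $fx=hy$ with $h\in I^n\cap[(x):y]$ one gets $h=xh'$ with $h'\in I^n:x$, hence $f=yh'$ since $x$ is a non-zerodivisor, and the inductive hypothesis applied to $h'$ closes the argument.
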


\begin{proof}
The case $n = r$ is trivial. For $n \ge r+1$, let $f$ be an arbitrary element of $I^{n+1}:x$. 
Since $I^{n+1} = (x,y)I^n$, there are elements $g,h \in I^n$ such that $xf = xg + yh$.
From this it follows that $h \in I^n \cap [(x):y]$.
As showed in the proof of Theorem \ref{equality}, 
$$I^n \cap [(x):y] = I^n \cap (x) = x(I^n:x).$$ 
Hence $h = xh'$ for some element $h' \in I^n:x$. Thus, $xf = xg + xyh'$. 
Since $x$ is a non-zerodivisor, $f = g +yh'  \in I^n + y(I^n:x)$.
So we have $I^{n+1}: x \subseteq I^n + y(I^n:x)$. 
Since the inverse inclusion is obvious,  we can conclude that 
$$I^{n+1}:x = I^n + y(I^n:x).$$
Applying this formula to $I^n:x$ and so on, we get
$I^{n+1}: x = I^n + y^{n-r}(I^{r+1}:x).$  
\end{proof}

\begin{Theorem} \label{alternative}
Let $A$ be a two-dimensional Buchsbaum ring with $\depth A > 0$ and $I$ an $\mm$-primary ideal.
Let $J$ be a minimal reduction of $I$. Set $r =  r_J(I)$. Then
$$\reg R(I) = \min\{n \ge r|\ \widetilde {I^r} = I^n:I^{n-r}\}.$$
\end{Theorem}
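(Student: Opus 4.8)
The plan is to prove the equality by establishing the two inequalities between $\reg R(I)$ and the quantity $m := \min\{n \ge r \mid \widetilde{I^r} = I^n:I^{n-r}\}$, drawing on Theorem \ref{regularity}, Proposition \ref{bound}, Lemma \ref{reduction}, and the intermediate colon-characterization of $\reg R(I)$ obtained inside the proof of Theorem \ref{equality}. First I would note that $\reg R(I) \ge r_J(I) = r$ by Theorem \ref{regularity}, so both the defining range of $m$ and the comparison make sense.

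For the inequality $m \le \reg R(I)$ I would invoke Proposition \ref{bound}(i) with $n$ replaced by $r$: it gives $\widetilde{I^r} = I^{r+t}:I^t$ for all $t \ge \reg R(I)-r$. Setting $n = r+t$, this reads $\widetilde{I^r} = I^n:I^{n-r}$ for every $n \ge \reg R(I)$. In particular the value $n = \reg R(I)$, which is $\ge r$, satisfies the defining condition, so $m \le \reg R(I)$.

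For the reverse inequality $\reg R(I) \le m$, I would fix a superficial sequence $x,y$ of $I$ with $J=(x,y)$ (available by Lemma \ref{generating}) and use the characterization derived in the proof of Theorem \ref{equality}, namely $\reg R(I) = \min\{n \ge r \mid I^{n+1}:x = I^n\}$. Hence it suffices to show $I^{m+1}:x = I^m$. By Lemma \ref{reduction} one has $I^{m+1}:x = I^m + y^{m-r}(I^{r+1}:x)$, so the task reduces to the inclusion $y^{m-r}(I^{r+1}:x) \subseteq I^m$. Here I would combine the chain $I^{r+1}:x \subseteq \widetilde{I^{r+1}}:x = \widetilde{I^r}$ noted in the proof of Theorem \ref{equality} with the defining hypothesis $\widetilde{I^r} = I^m:I^{m-r}$: for any $z \in I^{r+1}:x$ we then have $z \in I^m:I^{m-r}$, and since $y^{m-r}\in I^{m-r}$ this yields $zy^{m-r}\in (I^m:I^{m-r})I^{m-r}\subseteq I^m$. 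Thus $I^{m+1}:x = I^m$ and $\reg R(I)\le m$, completing the proof.

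The hard part is the reverse inequality, and the decisive point is that Lemma \ref{reduction} collapses the colon $I^{m+1}:x$ to the single colon $I^{r+1}:x$ at the reduction number, multiplied by a power of $y$; once this reduction is available, the containment $I^{r+1}:x \subseteq \widetilde{I^r}$ converts the hypothesis on $\widetilde{I^r}$ directly into the needed divisibility. Along the way I would verify that the non-zerodivisor property of $x$, valid since $\depth A>0$ makes $I$ a regular ideal and superficial elements non-zerodivisors, is exactly what legitimizes both the passage from $I^{m+1}\cap(x)=xI^m$ to $I^{m+1}:x=I^m$ and the use of Lemma \ref{reduction}.
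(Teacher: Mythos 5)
Your proof is correct. For the decisive inequality $\reg R(I)\le m$ it is exactly the paper's argument: reduce to showing $I^{m+1}:x=I^m$ via the characterization $\reg R(I)=\min\{n\ge r\mid I^{n+1}:x=I^n\}$ from the proof of Theorem \ref{equality}, then combine Lemma \ref{reduction} with the containments $I^{r+1}:x\subseteq\widetilde{I^{r+1}}:x=\widetilde{I^r}=I^m:I^{m-r}$. The only genuine divergence is in the inequality $m\le\reg R(I)$: you obtain it directly from Proposition \ref{bound}(i) applied with $n=r$, which exhibits $n=\reg R(I)$ as an element of the defining set, whereas the paper routes through Theorem \ref{equality} in the form $\reg R(I)=\min\{n\ge r\mid \widetilde{I^n}=I^n\}$ together with the identity $\widetilde{I^r}=\widetilde{I^n}:I^{n-r}$ from \cite[Lemma 3.1 (5)]{RV}; your version is slightly more economical and for that half uses only that $I$ is regular, not the Buchsbaum hypothesis. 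One minor housekeeping point: the colon characterization you quote is derived in the proof of Theorem \ref{equality} under the hypothesis that $I$ is not a parameter ideal, so you should either dispose of the case $r=0$ separately (as the paper does: there $I$ is generated by a $d$-sequence, $\reg R(I)=0$, and the asserted formula is trivial since $I^n:I^n=A=\widetilde{I^0}$) or note that the derivation of that characterization never actually uses the non-parameter assumption.
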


\begin{proof}
If $r = 0$, $I$ is a parameter ideal. Since $A$ is Buchsbaum, $I$ is generated by a $d$-sequence.
Hence $\reg R(I) = 0$ by \cite[Corollary 5.7]{Tr2}. 
In this case, the above formula is trivial. 
Therefore, we may assume that $I$ is not a parameter ideal. \par

Let $J = (x,y)$, where $x,y$ is a superficial sequence of $I$.
By the proof of Theorem \ref{equality}, we have
$$\reg R(I) = \min\{n \ge r|\ I^{n+1}: x = I^n\}.$$
By Lemma \ref{reduction},  
$$I^{n+1}: x = I^n + y^{n-r}(I^{r+1}:x) \subseteq I^n + I^{n-r}(\widetilde {I^{r+1}}:x).$$
By \cite[Lemma 3.1 (5)]{RV}, $\widetilde {I^{r+1}}:x = \widetilde {I^r}$.   
If $\widetilde {I^r} = I^n:I^{n-r}$, we have
$$I^n + I^{n-r}(\widetilde {I^{r+1}}:x) = I^n+ I^{n-r}(I^n:I^{n-r}) = I^n \subseteq I^{n+1}:x.$$
So we can conclude $I^{n+1}: x = I^n$.  Hence, 
$$\reg R(I) \ge \min\{n \ge r|\ \widetilde {I^r} = I^n:I^{n-r}\}.$$

To show the converse inequality we observe that $\widetilde{I^n}:x^{n-r} = \widetilde {I^r}$ \cite[Lemma 3.1 (5)]{RV}. 
Since $\widetilde {I^r} \subseteq \widetilde {I^n}: I^{n-r} \subseteq \widetilde {I^n}:x^{n-r}$, 
this implies  $ \widetilde {I^r} = \widetilde {I^n}: I^{n-r}$.
If $\widetilde {I^n} = I^n$, then $\widetilde {I^r} = I^n:I^{n-r}$. 
Therefore, using Theorem \ref{equality}, we have
$$\reg R(I)  = \min\{n \ge r|\ \widetilde {I^n} = I^n\} \le \min\{n \ge r|\ \widetilde {I^r} = I^n:I^{n-r}\}. $$
\end{proof}

%%%%%%%%%%%%%%%%%%%%%%%%%%%%%%%%

\section{Regularity of the fiber ring}

Throughout this section let $A$ be a finitely generated standard graded algebra over a field $k$ with $\dim A > 0$. 
Let $I$ be an ideal generated by homogeneous elements of the same degree $d$, $d \ge 1$.
Under this assumption, the fiber ring $F(I)$ is isomorphic to the algebra $k[I_d]$ generated by the elements of the component $I_d$ of $I$.  \par

In the following {\em we will identify $F(I)$ with $k[I_d]$}.
Let $\nn$ denote the maximal graded ideal of $F(I)$. 
Since $F(I)$ is a standard graded algebra over $k$,
$F(I) \cong G(\nn)$. By Theorem \ref{regularity}, 
$$\reg F(I) = \reg G(\nn) = \reg R(\nn),$$
and we can use a minimal reduction of $\nn$ to compute $\reg F(I)$. 
\par

In general, there is a natural correspondence between minimal reductions of $I$ and $\nn$. 
In our setting, this correspondence can be described as follows.  

\begin{Lemma} \label{correspondence}  
Let $J$ be a homogeneous reduction of $I$.
Let $\q$ be the ideal generated by $J_d$ in $F(I)$. 
Then $\q$ is a reduction of $\nn$ with $r_\q(\nn) = r_J(I)$.
\end{Lemma}

\begin{proof} 
By our assumption,  the ideals $\nn$ and $I$ or 
 $\q$ and $J$ have the the same generating set.
From this it follows that $\nn^{n+1} = \q\nn^n$ if and only if  $I^{n+1} = JI^n$.
Hence the conclusion is immediate.
\end{proof}

\begin{Theorem} \label{fiber} 
Let $x_1,...,x_s$ be a superficial sequence of homogeneous elements of $I$ such that $J = (x_1,...,x_s)$ is a reduction of $I$. Then 
\begin{align*} & \reg F(I) =  \min \big\{t \ge r_J(I) \mid \\
&\big((I^{n+1}) \cap [(x_1,...,x_{i-1}):x_i]\big)_{(n+1)d} = \big((x_1,...,x_{i-1})I^n)_{(n+1)d} \text{ for } n \ge t, i = 1,...,s \big\}.
\end{align*}
\end{Theorem}

\begin{proof}
By Theorem \ref{regularity} we have
$$I^{n+1} \cap [(x_1,...,x_{i-1}) :x_i] = (x_1,...,x_{i-1})I^n$$
for all $n \ge R(I)$, $i = 1,...,s$. Therefore,
$$\big(I^{n+1} \cap [(x_1,...,x_{i-1}):x_i]\big)_{(n+1)d} = \big((x_1,...,x_{i-1})I^t\big)_{(n+1)d}$$
for all $n \ge \reg R(I)$. Since $\nn^{t+1} = \oplus_{n \ge t}(I^{n+1})_{(n+1)d}$, this implies
$$\nn^{t+1} \cap [(x_1,...,x_{i-1})F(I) :x_i] = (x_1,...,x_{i-1})\nn^t$$
for $t \ge \reg R(I)$, $i = 1,...,s$.
Note that $\reg R(I) \ge r_J(I) = r_\q(\nn)$ by Theorem \ref{regularity} and Lemma \ref{correspondence}.
Then the above equations implies that  $x_1^*,...,x_s^*$ form a filter-regular sequence of $F(I) = G(\nn)$ \cite[Theorem 4.7]{Tr2}.
Hence, $x_1,...,x_s$ is a superficial sequence of $\nn$ by Lemma \ref{superficial}.
Now, we can apply Theorem \ref{regularity} to the reduction $\q = (x_1,...,x_s)F(I)$ of $\nn$, and obtain
\begin{align*}
& \reg F(I) =  \\
& \;\;\;\;\; \min\big\{t \ge r_J(I)|\  \nn^{t+1} \cap [(x_1,...,x_{i-1})F(I) :x_i] = (x_1,...,x_{i-1})\nn^t, i = 1,...,s\big\}.
\end{align*} 
This formula can be rewritten as in the statement of the theorem. 
\end{proof}

The relations in Theorem \ref{fiber} are only the initial parts of the relations  in Theorem \ref{regularity}. 
As an immediate consequence, we get the inequality $\reg R(I)  \ge \reg F(I),$ which was already observed by Eisenbud and Ulrich in \cite[Section 1]{EU}. \par

Since $I$ is generated by elements of the same degree, $R(I)$ is a bigraded algebra over $k$.   Eisenbud and Ulrich used the bigraded minimal free resolution to define $\reg R(I)$. That definition coincides with the definition of $\reg R(I)$ by means of local cohomology modules \cite{Ro}. \par 

Let $\mm$ be the maximal graded ideal of $A$. 
Eisenbud and Ulrich conjectured that if  $I$ is an $\mm$-primary ideal,  then $\reg R(I) = \reg F(I)$ \cite[Conjecture 1.3]{EU}.  
This conjecture is not true if $A$ is a non-Buchsbaum ring. This follows from the following observation.
Note that $A$ is called a Buchsbaum ring if $A_\mm$ is a Buchsbaum ring. 

\begin{Proposition} \label{Buchsbaum}
Assume that $\reg R(Q) = \reg F(Q)$ for every parameter ideal $Q$ generated by forms of the same degree in graded quotient rings of $A$. Then $A$ is a Buchsbaum ring. 
\end{Proposition}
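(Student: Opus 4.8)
The plan is to reduce the hypothesis to a purely $d$-sequence statement and then appeal to the characterization of Buchsbaum rings through $d$-sequences. The first observation is that the fiber ring of a parameter ideal carries no regularity: if $Q$ is a parameter ideal of a graded factor ring $B$ of $A$, generated by $r=\dim B$ forms of one and the same degree, then $Q$ is its own minimal reduction, so the analytic spread of $Q$ equals $\dim F(Q)=r$, while $F(Q)$ is generated by exactly $r$ elements in degree one. A standard graded $k$-algebra on $r$ generators of Krull dimension $r$ must be a polynomial ring, whence $\reg F(Q)=0$. Therefore the assumed equality $\reg R(Q)=\reg F(Q)$ forces $\reg R(Q)=0$, and by \cite[Corollary 5.7]{Tr2} this means precisely that $Q$ is generated by a $d$-sequence. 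So the hypothesis says: every parameter ideal generated by forms of a single degree, in every graded factor ring of $A$, is generated by a $d$-sequence.

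I would then aim to show that every system of parameters of $A_\mm$ is a $d$-sequence, which by Huneke \cite[Proposition 1.7]{Hun} is equivalent to $A$ being Buchsbaum. First reduce to an infinite residue field by a faithfully flat base change $k\to K$ to an infinite field, which preserves both the Buchsbaum property and the hypothesis, and use that for a graded ring it suffices to test the $d$-sequence condition on homogeneous systems of parameters. The verification is by induction on $\dim A$: to treat $x_1,\dots,x_d$ one passes to $B=A/(x_1,\dots,x_{i-1})$, again a graded factor ring, and is reduced to colon relations for the single homogeneous parameter $x_i\in B$. This is exactly why the hypothesis is imposed on \emph{all} factor rings: it keeps the induction inside the controlled class.

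The crux — and the step I expect to be the main obstacle — is the \emph{degree normalization}. The hypothesis only feeds us $d$-sequences generated in a single degree, whereas the parameter $x_i$ and the sop containing it typically have mixed degrees. The tool I would use is that $B$ is standard graded, so the ideal generated by the degree-$e$ part is $(B_e)=\mm_B^{\,e}$, which is $\mm_B$-primary; with $K$ infinite, a generic choice of $\dim B$ forms in $B_e$ is a system of parameters, and $x_i$ itself (of degree $e$) can be completed to such a same-degree system of parameters $x_i=z_1,\dots,z_t\in B_e$. By the reduced hypothesis $z_1,\dots,z_t$ is a $d$-sequence, so $0:_B z_1 z_k=0:_B z_1$; letting $z_k$ range over generic forms of $B_e$ and using $(B_e)=\mm_B^{\,e}$ yields the stability $0:_B x_i=0:_B x_i\,\mm_B^{\,e}$, and hence $0:_B x_i=0:_B x_i\,\mm_B^{\,N}$ for all $N$. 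The delicate point is to convert these single-degree colon stabilities, uniformly over all factor rings, into the colon equalities that the $d$-sequence (equivalently the weak-sequence) condition demands for an arbitrary homogeneous system of parameters of mixed degrees; handling the differing degrees of $x_i$ and the later $x_k$ is precisely where the argument is not formal. Once this transfer is carried out, Huneke's equivalence gives that $A_\mm$, hence $A$, is Buchsbaum.
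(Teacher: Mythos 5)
Your first half coincides with the paper's: since a system of parameters is analytically independent, $F(Q)$ is a polynomial ring, so the hypothesis forces $\reg R(Q)=0$, and \cite[Corollary 5.7]{Tr2} turns this into the statement that every homogeneous system of parameters generated in a single degree, in every graded factor ring of $A$, is a $d$-sequence. The problem is the second half. You correctly flag the degree normalization as ``the main obstacle'' and ``precisely where the argument is not formal,'' but you then leave it unresolved (``once this transfer is carried out\dots''). That transfer is the actual content of the proposition: Huneke's criterion \cite[Proposition 1.7]{Hun} demands that \emph{every} system of parameters be a $d$-sequence, the required colon equalities $(x_1,\dots,x_{i-1}):x_ix_k=(x_1,\dots,x_{i-1}):x_i$ genuinely involve products of parameters of unrelated degrees, and the single-degree stability $0:_Bx_i=0:_Bx_i\,\mm_B^{\,e}$ you extract does not by itself yield these equalities for a later parameter $x_k$ of a different degree. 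As written, the proof stops exactly at its decisive step, so there is a genuine gap.

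The paper closes this gap not by strengthening the transfer but by weakening the target: instead of Huneke's full criterion it fixes a homogeneous system of parameters $x_1,\dots,x_s$ of degree $2$ and invokes \cite[Corollary 2.6]{Tr} (absolutely superficial sequences), by which $A_\mm$ is Buchsbaum as soon as the specific sequences $x_1,\dots,x_i,y_1,\dots,y_{s-i}$ are $d$-sequences, where the $y_j$ run over a generating set of $\mm$, i.e.\ over linear forms. These particular mixed sequences are exactly the ones reachable from the same-degree hypothesis applied to the factor rings $A/(x_1,\dots,x_i)$ --- which is also why the hypothesis is imposed on all graded factor rings. To repair your argument you should replace the appeal to \cite[Proposition 1.7]{Hun} by this (or an equally restrictive) Buchsbaum criterion; attempting to reach arbitrary mixed-degree systems of parameters directly from the same-degree case is the hard road, and nothing in your sketch indicates how it would be traversed.
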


\begin{proof}
It is well known that every system of parameters is analytically independent.
From this it follows that $F(Q)$ is isomorphic to a polynomial ring over $k$.
Hence $\reg R(Q) = \reg F(Q) = 0$.  
By  \cite[Corollary 5.7]{Tr2}, this implies that $Q$ is generated by a $d$-sequence.
In particular, every system of parameters of $A$, which consists of forms of the same degree, is a $d$-sequence. \par
Let $x_1,...,x_s$ be a homogeneous system of parameters of degree 2 of $A$, $s = \dim A$.
Applying the above fact to the quotient ring $A/(x_1,...,x_i)$, $i < s$, we can deduce that every homogeneous system  of parameters  $x_1,...,x_i,y_1,...,y_{r-i}$ of $A$, where $y_1,...,y_{s-i}$ are linear forms, is a $d$-sequence.
By \cite[Corollary 2.6]{Tr}, a local ring $(B,\nn)$ is Buchsbaum if there exists a system of parameters $x_1',...,x_s'$ in $\nn^2$, $s = \dim B$, and a generating set $S$ for $\nn$ such that $x_1',...,x_i',y_1',...,y_{s-i}'$ is a $d$-sequence for every family $y_1',...,y_{s-i}'$ of $s-i$ elements of $S$, $i = 1,...,s$ (the term absolutely superficial sequence was used there for $d$-sequence). From this it follows that $A$ is a Buchsbaum ring. 
\end{proof}

The following example shows that $\reg R(I)$ can be arbitrarily larger than $\reg F(I)$ even when $I$ is a parameter ideal in an one-dimensional non-Buchsbaum ring.

\begin{Example} 
{\rm Let $A = k[x,y]/(x^t,xy^{t-1})$, $t \ge 2$. Then $A$ is a non-Buchsbaum ring for $t \ge 3$. Let $I = yA$. It is clear that $\reg F(I) = 0$. Using Theorem \ref{regularity}, we have 
$\reg R(I) = \min\{n \ge 0|\ y^{n+1}A \cap (0:yA) = 0\} = t-2.$} 
\end{Example}

We can prove the conjecture of Eisenbud and Ulrich in the following case.

\begin{Theorem} \label{Buch}
Let $A$ be a Buchsbaum ring with $\dim A \ge 1$,
Let $I$ be an $\mm$-primary ideal such that $\depth G(I) \ge \dim A-1$. 
Then 
$$\reg R(I) = \reg F(I) = r_J(I)$$
for every minimal reduction $J$ of $I$.
\end{Theorem}

\begin{proof}
By \cite[Theorem 1.2]{Tr1}, the assumption implies that $\reg R(I) = r_J(I)$. 
By Theorem \ref{fiber},  $r_J(I) \le \reg F(I) \le \reg R(I).$
So we must  have $\reg R(I) = \reg F(I) = r_J(I)$.
\end{proof}

The condition  $\depth G(I) \ge \dim A-1$ is satisfied if $\dim A = 1$. Therefore, we have the following consequence.

\begin{Corollary}
Let $A$ be an one-dimensional Buchsbaum ring.
Let $I$ be an $\mm$-primary ideal.
Then $\reg R(I) = \reg F(I)$.
\end{Corollary}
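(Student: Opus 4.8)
The plan is to obtain this as an immediate specialization of Proposition \ref{Buch}, which already establishes the equality $\reg R(I) = \reg F(I)$ for any Buchsbaum ring $A$ with $\dim A \ge 1$ subject to the extra requirement $\depth G(I) \ge \dim A - 1$. Since the corollary hypothesizes that $A$ is a one-dimensional Buchsbaum ring, the only point that needs to be addressed is that this depth requirement is automatically fulfilled when $\dim A = 1$.

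To check this, I would simply note that for $\dim A = 1$ the inequality $\depth G(I) \ge \dim A - 1$ reads $\depth G(I) \ge 0$. As the depth of any finitely generated module is a non-negative integer (with the convention that the depth of the zero module is $+\infty$), this inequality holds trivially and requires no information about the structure of $G(I)$. Consequently every hypothesis of Proposition \ref{Buch} is in force.

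Having verified the depth condition, I would then invoke Proposition \ref{Buch} directly to conclude $\reg R(I) = \reg F(I)$. I expect no genuine obstacle in this step: the substantive work --- namely the reduction to $\reg R(I) = r_J(I)$ for an arbitrary minimal reduction $J$ by means of \cite[Theorem 1.2]{Tr1}, together with the chain $r_J(I) \le \reg F(I) \le \reg R(I)$ coming from the displayed formulas for $\reg R(I)$ and $\reg F(I)$ --- was already carried out in the proof of Proposition \ref{Buch}. The corollary is thus nothing more than the case of that proposition in which its auxiliary depth hypothesis becomes vacuous.
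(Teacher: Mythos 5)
Your proposal is correct and matches the paper's own argument exactly: the paper states immediately before the corollary that the condition $\depth G(I) \ge \dim A - 1$ is automatically satisfied when $\dim A = 1$, and then deduces the corollary from Proposition \ref{Buch}. Your observation that the inequality reduces to the trivial $\depth G(I) \ge 0$ is precisely the point being used.
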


In the following we will show that there is a strong relationship between $\reg F(I)$ and the initial behavior of the ideals $\widetilde {I^n}$. \par

Let $s^*_\ini(I)$ denote the least integer $m$ such that  $(\widetilde {I^n})_{nd} = (I^n)_{nd}$ for all $n \ge m$.
Note that $nd$ is the initial degree of $\widetilde {I^n}$ and $I^n$.  

\begin{Lemma} \label{initial}
Let $I$ be a regular ideal. Then $s^*_\ini(I) \le \max\{\reg F(I), 1\}.$
\end{Lemma}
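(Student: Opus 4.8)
The plan is to recast the initial statement as a top-degree vanishing for a single $\nn$-torsion module and then bound that top degree by $\reg F(I)$. Write $A^{(d)}=\bigoplus_{m\ge 0}A_{md}$ for the $d$-th Veronese of $A$, regraded so that $A_{md}$ sits in degree $m$. Since $(I^a)_{ad}(I^b)_{bd}\subseteq (I^{a+b})_{(a+b)d}$, the fiber ring $F(I)=\bigoplus_{m\ge 0}(I^m)_{md}$ is a graded subring of $A^{(d)}$ with maximal graded ideal $\nn$, and $N:=A^{(d)}$ becomes a graded $F(I)$-module containing $F(I)$. Using the standard inclusions $I^a\widetilde{I^b}\subseteq\widetilde{I^{a+b}}$, one checks that $\widetilde F:=\bigoplus_{m\ge 0}(\widetilde{I^m})_{md}$ is an intermediate graded $F(I)$-module, $F(I)\subseteq\widetilde F\subseteq N$, whose degree-$m$ component is $(\widetilde{I^m})_{md}$. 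Since $s^*_\ini(I)$ is, up to the convention $m\ge 1$, one more than the top nonzero degree of $\widetilde F/F(I)$, the assertion is equivalent to showing $(\widetilde F/F(I))_n=0$ for all $n\ge\reg F(I)$.

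First I would fix a superficial element $x\in I_d$ of $I$ (a generic form of degree $d$; as in Section 1 we may assume $k$ infinite). Because $I$ is regular, $x$ is a nonzerodivisor on $A$ by \cite[Lemma 1.2]{RV}, hence on $N$ and on $F(I)$; in particular $H^0_\nn(N)=0$, and $\nn$ is itself a regular ideal of $F(I)$. The central computation is the identification $\widetilde F/F(I)=H^0_\nn(N/F(I))$. For this I would use $\widetilde{I^{m+1}}:x=\widetilde{I^m}$ from \cite[Lemma 3.1 (5)]{RV}, iterated to $\widetilde{I^{m+t}}:x^t=\widetilde{I^m}$, which yields $\widetilde{I^m}=\bigcup_{t\ge 0}(I^{m+t}:x^t)$; reading this in degree $md$ identifies the degree-$m$ part of the $x$-power-torsion of $N/F(I)$ with $(\widetilde{I^m})_{md}/(I^m)_{md}$. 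Since $x^s\in\nn^s$, the $\nn$-torsion and the $x$-torsion of $N/F(I)$ coincide, so this torsion is exactly $\widetilde F/F(I)$.

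Finally I would feed the short exact sequence $0\to F(I)\to N\to N/F(I)\to 0$ into the long exact sequence of local cohomology with respect to $\nn$. Because $H^0_\nn(N)=0$, the connecting homomorphism embeds $H^0_\nn(N/F(I))$ into $H^1_\nn(F(I))$, whence the top degree of $\widetilde F/F(I)=H^0_\nn(N/F(I))$ is at most $a_1(F(I))$. By the definition of Castelnuovo--Mumford regularity, $a_1(F(I))\le\reg F(I)-1$, so $(\widetilde{I^n})_{nd}=(I^n)_{nd}$ for every $n\ge\reg F(I)$; combined with the convention $s^*_\ini(I)\ge 1$ this gives $s^*_\ini(I)\le\max\{\reg F(I),1\}$.

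I expect the main obstacle to be the torsion identification $\widetilde F/F(I)=H^0_\nn(N/F(I))$: one must be careful that the correct ambient module is the Veronese $N=A^{(d)}$ rather than $F(I)$ itself, since an element of $(\widetilde{I^m})_{md}$ need not lie in $F(I)_m=(I^m)_{md}$, and one must verify that passing to a single superficial power $x^t$ recovers the full Ratliff--Rush colon $\bigcup_t I^{m+t}:I^t$. Once the nonzerodivisor $x\in I_d$ is secured, the remaining homological steps are routine.
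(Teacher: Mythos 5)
Your argument is correct, and it takes a genuinely different route from the paper's. The paper's proof of Lemma \ref{initial} is a short colon computation carried out inside the fiber ring: it picks $x\in I_d$ superficial for the maximal ideal $\nn$ of $F(I)$, applies Lemma \ref{intersection} to $\nn$ to get $\nn^{n+1}\cap(x)=x\nn^n$, hence $\nn^{n+t}:x^t=\nn^n$ for $n\ge\reg F(I)$, and then reads off $(I^{n+t}:I^t)_{nd}\subseteq(I^{n+t}:x^t)_{nd}=(I^n)_{nd}$. You instead assemble the graded pieces $(\widetilde{I^m})_{md}/(I^m)_{md}$ into the torsion of the $F(I)$-module $N/F(I)$, with $N=A^{(d)}$ the Veronese, and bound its top degree by $a_1(F(I))\le\reg F(I)-1$ via the long exact sequence of local cohomology. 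Your route is longer, but it buys precision at exactly the point you flag: an element of $(I^{n+t}:x^t)_{nd}$ lives in $A_{nd}$ and is not known in advance to lie in $F(I)_n=(I^n)_{nd}$, so identifying it with a colon computed inside $F(I)$ really does require the Veronese module as ambient space; your setup makes that bookkeeping explicit. Note also that the two proofs ultimately rest on the same cohomological fact, since Lemma \ref{intersection} is itself derived from the bound $n\ge\max\{a_0(G(I)),a_1(G(I))+1\}$ (see Remark \ref{a}), so your argument is essentially the paper's with the local cohomology unfolded.

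One step of yours needs its justification turned around. You assert that the $\nn$-power-torsion and the $x$-power-torsion of $N/F(I)$ coincide ``since $x^s\in\nn^s$''; that inclusion only yields $H^0_\nn(N/F(I))\subseteq H^0_{(x)}(N/F(I))$, whereas your argument needs the opposite containment, because it is $H^0_\nn$, not $H^0_{(x)}$, that embeds into $H^1_\nn(F(I))$ with degrees bounded by $a_1(F(I))$. The needed direction does hold, and you already have the ingredients: if $\bar f$ in degree $m$ is killed by $x^t$, then $f\in I^{m+t}:x^t\subseteq\widetilde{I^{m+t}}:x^t=\widetilde{I^m}$ by \cite[Lemma 3.1 (5)]{RV}, so $fI^s\subseteq I^{m+s}$ for some $s$ by the definition of the Ratliff--Rush closure, and since $I^s$ is generated in degree $sd$ this gives $\nn^s\bar f=0$ in $N/F(I)$. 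With that sentence added, the proof is complete.
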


\begin{proof}
We have to show that $(\widetilde {I^n})_{nd} = (I^n)_{nd}$ for $n \ge \reg F(I)$.
Since $\widetilde {I^n} = \cup_{t \ge 0}I^{n+t}:I^t$, it suffices to show that
$(I^{n+t}:I^t)_{nd} \subseteq (I^n)_{nd}$ for $n \ge \reg F(I)$. \par

Let $x \in I_d$ be a superficial element of $\nn$. Since $\nn$ is a regular ideal, $x$ is a non-zerodivisor.
By Corollary \ref{intersection}, $\nn^{n+1} \cap (x) = x\nn^n$ for $n \ge \reg F(I)$. 
This implies $\nn^{n+1}: x = \nn^n$. 
From this it follows that $\nn^{n+t}: x^t = \nn^n$ for $n \ge \reg F(I)$.
Note that $(\nn^n)_d = (I^n)_{nd}$. Then 
$$(I^{n+t}:I^t)_{nd} \subseteq (I^{n+t}:x^t)_{nd} = (\nn^{n+t}:x^t)_d = (\nn^n)_d = (I^n)_{nd}
$$
for $n \ge \reg F(I)$, which implies the conclusion.
\end{proof}

\begin{Theorem} \label{equality 2} 
Let $A$ be a two-dimensional Buchsbaum ring with $\depth A > 0$.  
Let $I$ be an $\mm$-primary ideal, which is not a parameter ideal. 
Let $J$ be a homogeneous minimal reduction of $I$. Then 
$$\reg F(I) = \max\{r_J(I), s_\ini^*(I)\} = \min\{n \ge r_J(I)|\ (\widetilde {I^n})_{nd} = (I^n)_{nd}\}.$$
\end{Theorem}

\begin{proof}
By Theorem \ref{fiber} we have $\reg F(I) \ge r_J(I)$. Since $I$ is not a parameter ideal, $\nn$ is not generated by two elements. From this it follows that the defining equations of $F(I)$ have degree $> 1$. Hence $\reg F(I) > 0$ \cite{EG}. By Lemma \ref{initial}, this implies $\reg F(I) \ge s^*_\ini(I)$.
Thus,  
$$\reg F(I) \ge \max\{r_J(I), s_\ini^*(I)\} \ge \min\{n \ge r_J(I)|\ (\widetilde {I^n})_{nd} = (I^n)_{nd}\}.$$ 
Therefore,
it suffices to show that $\reg  F(I) \le \min\{n \ge r_J(I)|\  (\widetilde {I^n})_{nd} = (I^n)_{nd}\}.$ \par

Since $I = (I_d)$ is a regular ideal,  $I_d$ contains a non-zerodivisor. From this it follows that
$\depth F(I) = \depth k[I_d] > 0$. Let $\nn$ be the maximal graded ideal of $F(I)$ and $\q$ the ideal generated by $J_d$ in $F(I)$.
Then $\q$ is a minimal reduction of $\nn$ with $r_\q(\nn) = r_J(I)$ by Lemma \ref{correspondence}.
By Lemma \ref{generating} we may assume that $\q = (x,y)$, where $x,y$ is a superficial sequence for $\nn$.
By the proof of Theorem \ref{regularity} to $\nn$ we have  
$$\reg F(I) =   \min\{n \ge r_J(I)|\  \nn^{n+1} : x = \nn^n\}.$$
Thus, we only need to show that if $(\widetilde {I^n})_{nd} = (I^n)_{nd}$ for $n \ge r_J(I)$, then $\nn^{n+1} : x = \nn^n$.
Note that $\nn^{n+1} : x = \oplus_{t \ge n} (I^{t+1}:x)_{td}$ and $\nn^n = \oplus_{t \ge n}(I^t)_{td}$.
\par

Without restriction we may also assume that $x,y$ is a superficial sequence for $I$ and $J = (x,y)$. 
By Lemma \ref{reduction}, we have 
$I^{t+1}:x = I^t + y^{t-n}(I^{n+1}:x)$
for $t \ge n \ge r_J(I)$. 
Since $I^{n+1}:x \subseteq \widetilde {I^{n+1}}:x = \widetilde {I^n}$ \cite[Lemma 3.1(5)]{RV},  
$I^{t+1}:x \subseteq I^t + y^{t-n}\widetilde {I^n}.$
If $(\widetilde {I^n})_{nd} = (I^n)_{nd}$, then
$$(I^{t+1}:x)_{td} \subseteq  (I^t)_{td} + y^{t-n}(\widetilde {I^n})_{nd} \subseteq  (I^t)_{td} + y^{t-n}(I^n)_{nd}\\
= (I^t)_{td} \subseteq (I^{t+1}:x)_{td}. $$
From this it follows that $(I^{t+1}:x)_{td} = (I^t)_{td}$ for all $t > n$. Hence, $\nn^{n+1} : x = \nn^n$, as required.
\end{proof} 

\begin{Corollary} \label{initial 1}
Let $A$ be a two-dimensional Buchsbaum ring with $\depth A > 0$.
Let $I$ be an $\mm$-primary ideal. 
Let $J$ be a homogeneous minimal reduction of $I$.
Then $\reg R(I) = \reg F(I)$ if and only if $\widetilde {I^n} = I^n$ for the least integer $n \ge r_J(I)$ such that 
$(\widetilde {I^n})_{nd} = (I^n)_{nd}$.
\end{Corollary}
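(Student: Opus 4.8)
The plan is to combine the two formulas established in Theorem \ref{equality} and Theorem \ref{equality 2}, since both characterize the respective regularities as minima over the same range $n \ge r_J(I)$. Set $r = r_J(I)$. By Theorem \ref{equality}, $\reg R(I) = \min\{n \ge r \mid \widetilde{I^n} = I^n\}$, and by Theorem \ref{equality 2}, $\reg F(I) = \min\{n \ge r \mid (\widetilde{I^n})_{nd} = (I^n)_{nd}\}$. Since $\widetilde{I^n} = I^n$ trivially implies $(\widetilde{I^n})_{nd} = (I^n)_{nd}$, the condition defining $\reg R(I)$ is stronger than the one defining $\reg F(I)$ at each level $n$; hence $\reg F(I) \le \reg R(I)$, consistent with (5). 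Let $n_0$ denote the least integer $n \ge r$ with $(\widetilde{I^n})_{nd} = (I^n)_{nd}$, so that $\reg F(I) = n_0$.

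First I would dispose of the degenerate case. If $I$ is a parameter ideal, then by the proof of Theorem \ref{alternative} (using that $A$ is Buchsbaum, so $I$ is generated by a $d$-sequence) we have $\reg R(I) = 0 = \reg F(I)$ and $r = 0$; here $n_0 = 0$ and $\widetilde{I^0} = I^0 = A$, so the stated equivalence holds trivially. Thus I may assume $I$ is not a parameter ideal, which is exactly the hypothesis under which both Theorem \ref{equality} and Theorem \ref{equality 2} apply.

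The core of the argument is then a short equivalence. Suppose $\widetilde{I^{n_0}} = I^{n_0}$. Since $n_0 \ge r$ and $n_0$ satisfies the defining condition for $\reg F(I)$, this gives an integer $n \ge r$ with $\widetilde{I^n} = I^n$, so $\reg R(I) \le n_0 = \reg F(I)$; combined with (5) this forces $\reg R(I) = \reg F(I)$. Conversely, suppose $\reg R(I) = \reg F(I) = n_0$. Then $n_0 = \min\{n \ge r \mid \widetilde{I^n} = I^n\}$ by Theorem \ref{equality}, so in particular the minimizing value itself satisfies $\widetilde{I^{n_0}} = I^{n_0}$, which is the asserted conclusion.

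The only genuine subtlety to check carefully is that $n_0$ — the least index at which the initial-degree equality holds — is well-defined and finite, and that it is indeed the same $n_0$ appearing on both sides of the claimed equivalence; this is immediate since $\reg F(I) = n_0 < \infty$ by Theorem \ref{equality 2} and $s^*_\ini(I) \le s^*(I) < \infty$. I would also note that no additional hypothesis beyond those of Theorem \ref{equality 2} is needed, since that theorem already requires $I$ not to be a parameter ideal; the parameter-ideal case is handled separately as above. The main obstacle, such as it is, is purely bookkeeping: ensuring that the phrase ``the least integer $n \ge r_J(I)$ such that $(\widetilde{I^n})_{nd} = (I^n)_{nd}$'' in the statement is literally $\reg F(I)$, so that the biconditional reduces to the single comparison $\reg R(I) \le \reg F(I)$ versus the reverse inequality (5), with the whole content residing in whether the Ratliff-Rush equality at that one level upgrades from the graded piece $nd$ to the full ideal.
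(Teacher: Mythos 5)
Your proof is correct and takes essentially the same route as the paper: the parameter-ideal case is handled separately (the paper cites \cite[Corollary 5.7]{Tr2} and Corollary \ref{d-sequence} where you invoke the $d$-sequence argument from the proof of Theorem \ref{alternative}, but the content is identical), and the main case is exactly the combination of the two minimum formulas of Theorem \ref{equality} and Theorem \ref{equality 2} together with the inequality $\reg R(I) \ge \reg F(I)$. The paper compresses this into one line; you have simply written out the bookkeeping.
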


\begin{proof}
If $I$ is a parameter ideal, we have $\reg R(I) = \reg F(I) = 0$ by \cite[Corollary 5.7]{Tr2} and $\widetilde {I^n} = I^n$ for $n \ge 1$ by Corollary \ref{d-sequence}. If $I$ is not a parameter ideal, the conclusion follows from Theorem \ref{equality} and Theorem \ref{equality 2}. 
\end{proof}

%%%%%%%%%%%%%%%%%%%%%%%5

\section{Monomial ideals in two variables}

In this section we will use the relationship between Castelnuovo-Mumford regularity and Ratliff-Rush closure to investigate the conjecture of Eisenbud and Ulrich for monomial ideals in two variables.\par

Let $A = k[x,y]$ be a polynomial ring over a field $k$ and $\mm = (x,y)$.
Let $I$ be an $\mm$-primary ideal generated by monomials of degree $d$, $d \ge 1$. 
Then $I$ contains $x^d,y^d$, and $J = (x^d,y^d)$ is a minimal reduction of $I$.  
It is well-known \cite{Sa} and easy to see that
$$\widetilde {I^n} = \bigcup_{t \ge 0}I^{n+t}: (x^{td},y^{td}).$$

\begin{Lemma}  \label{middle 1}
Let $I= (x^d,y^d) + (x^{d-i}y^i\mid a \le i \le b)$, where $a \le b < d$ are given positive integers.  
Then $\widetilde {I^n} = I^n$ for all $n \ge 1$. 
\end{Lemma}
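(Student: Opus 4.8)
The plan is to compute $\widetilde{I^n}$ directly from the monomial description recorded just before the statement, namely $\widetilde{I^n}=\bigcup_{t\ge0}I^{n+t}:(x^{td},y^{td})$. Since the colon ideals $I^{n+t}:(x^{td},y^{td})$ form an increasing chain whose union is $\widetilde{I^n}$, a monomial lies in $\widetilde{I^n}$ if and only if it lies in this colon for all $t\gg0$; and because everything in sight is monomial, it suffices to decide membership of an arbitrary monomial $x^py^q$. The reverse inclusion $I^n\subseteq\widetilde{I^n}$ being automatic, the whole content is to show that membership of $x^py^q$ in the colon forces $x^py^q\in I^n$.

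To make this combinatorial, I would write $S=\{0\}\cup\{a,a+1,\dots,b\}\cup\{d\}$, so that $I^n$ is generated in degree $nd$ by the monomials $x^{nd-s}y^s$ with $s$ in the $n$-fold sumset $nS$. Then $x^py^q\in I^n$ if and only if $nS\cap[nd-p,\,q]\neq\emptyset$ (one may assume $p,q<nd$, the remaining cases following from $0,nd\in nS$). Unwinding the two divisibility conditions $x^{p+td}y^q\in I^{n+t}$ and $x^py^{q+td}\in I^{n+t}$ gives, respectively, $(n+t)S\cap[nd-p,\,q]\neq\emptyset$ and $\{s'\in(n+t)S:\ (n+t)d-p\le s'\le q+td\}\neq\emptyset$. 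I would rewrite the second one using the reflection $s\mapsto(n+t)d-s$, which is the identity $Nd-NS=NS^{*}$ with $S^{*}=\{0\}\cup\{d-b,\dots,d-a\}\cup\{d\}$ (this $S^{*}$ is simply the exponent set obtained from $I$ by the symmetry $x\leftrightarrow y$); it turns the second condition into $(n+t)S^{*}\cap[nd-q,\,p]\neq\emptyset$. Letting $t\to\infty$ and noting that membership of a bounded value in $(n+t)S$ stabilizes to membership in the monoid $\langle S\rangle$ generated by $S$, I obtain the clean characterization
$$x^py^q\in\widetilde{I^n}\iff \langle S\rangle\cap[nd-p,\,q]\neq\emptyset\ \text{ and }\ \langle S^{*}\rangle\cap[nd-q,\,p]\neq\emptyset.$$

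The lemma then reduces to the purely arithmetic statement that these two monoid conditions imply $nS\cap[nd-p,\,q]\neq\emptyset$. Here I would pick $v\in\langle S\rangle\cap[nd-p,q]$ and $w\in\langle S^{*}\rangle\cap[nd-q,p]$, write $v=\alpha d+\beta$ with $\beta$ a sum of $\gamma$ elements of $[a,b]$ and $\alpha$ chosen as large as possible, and observe that $v\le q<nd$ forces $\alpha<n$. If the number of summands $\alpha+\gamma$ is at most $n$ then $v\in nS$ and $v\in[nd-p,q]$, and we are done; the task is to bound $\gamma$. The role of the second condition is exactly to supply this bound: the $S^{*}$-representation of $w$ detects that the residues forcing extra $[a,b]$-summands in $v$ do not occur, so that the maximal-$d$ representation of $v$ already uses at most $n$ parts.

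The main obstacle is this last arithmetic lemma, i.e. controlling the number of summands. The subtlety is that $\langle S\rangle$ has two ``gaps'', $\{1,\dots,a-1\}$ at the bottom (nonempty when $a\ge2$) and $\{b+1,\dots,d-1\}$ at the top (nonempty precisely because $b<d$); a value $v\le nd$ can fail to lie in $nS$ only when, for every admissible number of $d$'s, the remainder $v-\alpha d$ falls into a bottom gap and thus demands too many $[a,b]$-summands. The hypothesis $b<d$ enters here --- it is what makes the top gap genuine and what couples the two monoids $\langle S\rangle$ and $\langle S^{*}\rangle$ through the reflection --- and the delicate case is when the minimal representation of $v$ appears to need more than $n$ parts, where one must trade $d$'s against $[a,b]$-summands using the information carried by $w$ to certify that $v\in nS$ after all.
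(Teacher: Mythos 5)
Your reduction is set up correctly: translating membership in $\widetilde{I^n}=\bigcup_{t}I^{n+t}:(x^{td},y^{td})$ into the two sumset conditions $\langle S\rangle\cap[nd-p,q]\neq\emptyset$ and $\langle S^*\rangle\cap[nd-q,p]\neq\emptyset$, with $S=\{0\}\cup\{a,\dots,b\}\cup\{d\}$, is sound (and is essentially the lattice-point dictionary the paper itself records after Theorem \ref{neighbor}). But the proof stops exactly where the lemma's content begins. The ``purely arithmetic statement'' that the two monoid conditions force $nS\cap[nd-p,q]\neq\emptyset$ is the whole theorem in disguise, and your last paragraph does not prove it: it describes a strategy (``trade $d$'s against $[a,b]$-summands using the information carried by $w$'') and then names the delicate case without resolving it. In particular, you never explain \emph{how} the existence of $w\in\langle S^*\rangle\cap[nd-q,p]$ bounds the number of summands in a representation of $v$; the phrase ``the $S^*$-representation of $w$ detects that the residues forcing extra $[a,b]$-summands in $v$ do not occur'' is not an argument. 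As written, the proposal is a correct reformulation plus an unproved key lemma.

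For comparison, the paper closes this gap by a three-way case split on the exponents $(p,q)$ that never needs the two conditions simultaneously. If $q<na$, one takes a product representation of a generator $x^{(n+t)d-c}y^c$ dividing $x^{p+td}y^q$ and counts the copies of $x^d$ among its $n+t$ factors: since every other generator contributes at most $d-a$ to the $x$-exponent, fewer than $t$ copies of $x^d$ would force $(n+t)d-c\le td+n(d-a)-(t-s)a<td+n(d-a)$, contradicting $c\le q<na$; hence one may cancel $x^{td}$ against $t$ copies of $x^d$ and land in $I^n$. The case $p<n(d-b)$ is symmetric (this is where the second colon condition is used). In the remaining case $p\ge n(d-b)$ and $q\ge na$, the consecutiveness of the middle exponents gives $\{na,na+1,\dots,nb\}\subseteq nS$, so a suitable $s\in nS\cap[nd-p,q]$ can be written down directly. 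If you want to salvage your approach, you must either prove your arithmetic lemma in full or reproduce a case analysis of this kind; note that the consecutive-interval hypothesis $a\le i\le b$ must enter somewhere, and in your current write-up it never does.
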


\begin{proof}
Let $x^iy^j$ be an arbitrary monomial of $\widetilde {I^n}$. 
Then $x^{i+td}y^j \in I^{n+t}$ for some $t \ge 1$. 
Since $I^{n+t}$ is generated by monomials of degree $(n+t)d$, 
$x^{i+td}y^j$ is divisible by a monomial $x^{(n+t)d-c}y^c \in I^{n+t}$. 
The divisibility implies $i+td \ge (n+t)d-c$ and $j \ge c$.  \par

If $j < na$, then 
$$(n+t)d - c \ge (n+t)d - j > (n+t)d + na = td + n(d-a).$$
Let $M = \{x^d,x^{d-a}y^a,x^{d-a-1}y^{a+1},...,x^{d-b}y^b,y^d\}$ be the set of the monomial generators of $I$. 
Then $x^{(n+t)d-c}y^c$ is a product of $n+t$ monomials of $M$. Let $s$ be the number of copies of $x^d$ among these $n+t$ monomials of $M$. If $s < t$, we would have $(n+d)d -c \le sd + (n+t-s)(d-a)$ because the exponent of $x$ in each monomial in $M \setminus \{x^d\}$ is less or equal $d-a$. Since 
$$sd + (n+t-s)(d-a) =  td + n(d-a) - (t-s)a < td + n(d-a),$$
we would get $(n+d)d -c < td+n(d-a)$, a contradiction. Therefore, we must have $s \ge t$. From this it follows that $x^{nd-c}y^c = x^{(n+t)d-c}y^c/x^{td}$ is a product of $n$ monomials in $M$. Hence $x^{nd-c}y^c \in I^n$. Since $x^iy^j$ is divisible  by $x^{nd-c}y^c$, $x^iy^j \in I^n$. \par
 
By symmetry, if $i < n(d-b)$, we can also show that $x^iy^j \in I^n$. \par
  
Now, we may assume that $i \ge n(d-b)$ and $j \ge na$. 
Let $Q$ denote the ideal generated by the monomials $x^{d-j}y^j$, $a \le j \le b$. 
It is clear that $Q^n$ is generated by the monomials $x^{nd-j}y^j $, $na \le j \le nb$.
If $ j  <  nb$, $x^iy^j$ is divisible by $x^{nd-j}y^j$ because $i \ge nd - c \ge nd-j$.
Therefore, $x^iy^j \in Q^n$. Since $Q \subset I$, we obtain $x^iy^j \in I^n$. 
If $j \ge nb$, then $x^iy^j$ is divisible by $x^{n(d-b)}y^{nb} = (x^{d-b}y^b)^n \in I^n$.
Thus, we always have $x^iy^j \in I^n$.
Therefore, we can conclude that $\widetilde {I^n} = I^n$.
\end{proof}

Lemma \ref{middle 1} gives a large class of monomial ideals in two variables for which the conjecture of Eisenbud and Ulrich holds.

\begin{Theorem}  \label{middle 2}
Let $I= (x^d,y^d) + (x^{d-i}y^i\mid a \le i \le b)$, where $a \le b < d$ are given positive integers.  Then
$$\reg R(I) = \reg F(I) = r_J(I)$$
for any homogeneous minimal reduction $J$ of $I$.
\end{Theorem}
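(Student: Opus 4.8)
The plan is to derive the result directly from the vanishing of the Ratliff--Rush filtration proved in Lemma \ref{middle 1}, fed into the structural formulas of Sections 2 and 3. First I would record that the ambient ring $A = k[x,y]$ is a two-dimensional Cohen--Macaulay ring, hence Buchsbaum, with $\depth A = 2 > 0$; thus the standing hypotheses of both Theorem \ref{equality} (for $\reg R(I)$) and Theorem \ref{equality 2} (for $\reg F(I)$) are satisfied. The one genuine verification needed is that $I$ is not a parameter ideal. Since $0 < a \le b < d$, each middle generator $x^{d-i}y^i$ with $a \le i \le b$ is a monomial of degree $d$ in which both variables occur, so it is distinct from $x^d$ and from $y^d$; as all of the listed generators have the same degree $d$, none of them lies in the ideal generated by the others. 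Hence $I$ has exactly $2 + (b-a+1) \ge 3$ minimal monomial generators and cannot be generated by a system of parameters. In particular $r_J(I) \ge 1$ for every minimal reduction $J$.

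Next I would invoke Lemma \ref{middle 1}, which gives $\widetilde{I^n} = I^n$ for all $n \ge 1$. Since this equality holds at $n = r_J(I)$ and $r_J(I) \ge 1$, Theorem \ref{equality} yields
\[
\reg R(I) = \min\{n \ge r_J(I)\mid \widetilde{I^n} = I^n\} = r_J(I).
\]
Moreover $\widetilde{I^n} = I^n$ trivially forces the initial-degree equality $(\widetilde{I^n})_{nd} = (I^n)_{nd}$ for every $n$, so Theorem \ref{equality 2} gives in the same way
\[
\reg F(I) = \min\{n \ge r_J(I)\mid (\widetilde{I^n})_{nd} = (I^n)_{nd}\} = r_J(I).
\]
Comparing the two displays yields $\reg R(I) = \reg F(I) = r_J(I)$, and since this holds for an arbitrary homogeneous minimal reduction $J$, the conjecture of Eisenbud and Ulrich is confirmed for this class.

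I expect essentially no obstacle at this stage, because all the analytic difficulty has been absorbed into Lemma \ref{middle 1} (the combinatorial argument that $\widetilde{I^n} = I^n$) and into the already-established formulas of Theorems \ref{equality} and \ref{equality 2}. The only place demanding care is the verification that $I$ is genuinely not a parameter ideal, which is what licenses those two theorems and guarantees $r_J(I) \ge 1$ so that the maxima collapse to $r_J(I)$. As a byproduct, because $\reg R(I)$ is intrinsic to $I$ and independent of $J$, the identity $\reg R(I) = r_J(I)$ shows that the reduction number is the same for every minimal reduction of $I$, in line with Corollary \ref{invariance}.
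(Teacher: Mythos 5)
Your proof is correct and follows essentially the same route as the paper: feed Lemma \ref{middle 1} into Theorems \ref{equality} and \ref{equality 2} (you use their $\min$-formulations where the paper uses $s^*(I)=s^*_{\ini}(I)=1$ and the $\max$-formulations, which is an immaterial difference). Your explicit check that $I$ is not a parameter ideal, via the count of at least three minimal monomial generators, is a detail the paper leaves implicit but is a welcome addition.
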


\begin{proof}
By  Lemma \ref{middle 1}, we have $s^*(I) = 1$. Since $s^*(I) \ge s^*_\ini(I) \ge 1$, we also have $s^*_\ini(I) = 1$.
Applying Theorem \ref{equality} and Theorem \ref{equality 2}, we obtain $\reg R(I) = \reg F(I) = r_J(I)$.
\end{proof}

Theorem \ref{middle 2} contains the case $I$ is an $\mm$-primary ideal generated by three monomials.

\begin{Corollary}  
Assume that $I = (x^d,x^{d-a}y^a,y^d)$, $1 \le a <d$. Then
$$\reg R(I) = \reg F(I) = d/(a,d)-1.$$
\end{Corollary}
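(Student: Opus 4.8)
The plan is to observe first that the corollary is nothing but the special case $a=b$ of Theorem \ref{middle 2}. With $b=a$ the ideal $(x^d,y^d)+(x^{d-i}y^i\mid a\le i\le b)$ collapses to $I=(x^d,x^{d-a}y^a,y^d)$, and $J=(x^d,y^d)$ is a homogeneous minimal reduction of $I$. Hence Theorem \ref{middle 2} already yields $\reg R(I)=\reg F(I)=r_J(I)$, so the entire remaining content of the corollary is the computation of the reduction number $r_J(I)=d/(a,d)-1$. I would therefore devote the proof to this purely combinatorial computation.

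To compute $r_J(I)$, I would describe the powers of $I$ monomially. Writing $u=x^d$, $v=x^{d-a}y^a$, $w=y^d$, the ideal $I^n$ is generated by the monomials $u^iv^jw^k$ with $i+j+k=n$, each of degree $dn$ and with $y$-exponent $aj+dk$. Thus, in degree $dn$, $I^n$ is spanned exactly by the monomials $x^{dn-s}y^s$ with $s$ in
$$S_n=\{aj+dk\mid j,k\ge 0,\ j+k\le n\}.$$
Since $I^{n+1}$ and $JI^n$ are both generated in degree $d(n+1)$, and multiplying a degree-$dn$ generator by $x^d$ (resp.\ $y^d$) keeps the $y$-exponent (resp.\ raises it by $d$), the equality $I^{n+1}=JI^n$ is equivalent to $S_{n+1}=S_n\cup(S_n+d)$. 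A short inspection shows $S_n\cup(S_n+d)\subseteq S_{n+1}$ always, and that the only element of $S_{n+1}$ possibly absent from $S_n\cup(S_n+d)$ is $a(n+1)$, coming from the single generator $(j,k)=(n+1,0)$, i.e.\ the monomial $v^{n+1}=(x^{d-a}y^a)^{n+1}$. Hence $r_J(I)$ is the least $n$ with $v^{n+1}\in JI^n$, equivalently with $a(n+1)\in S_n\cup(S_n+d)$.

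The heart of the matter, and the step I expect to be the main obstacle, is the number-theoretic analysis of when $a(n+1)\in S_n\cup(S_n+d)$. Setting $g=(a,d)$ and $d=gd'$, $a=ga'$ with $(a',d')=1$, I would argue that $a(n+1)=aj+dk$ forces $a(n+1-j)=dk$, hence $d'\mid(n+1-j)$; writing $n+1-j=md'$ gives $k=a'm$ and $j=n+1-md'$. Because $a<d$ we have $a'\le d'-1$, so for every $m\ge 1$ the resulting pair satisfies $j+k=n+1+m(a'-d')\le n$, placing $a(n+1)$ already in $S_n$, whereas the excluded representation $(j,k)=(n+1,0)$ is exactly the case $m=0$. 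Thus a valid representation exists precisely when some $m\ge 1$ obeys $md'\le n+1$, i.e.\ when $n\ge d'-1$; as this condition is monotone in $n$, it gives $r_J(I)=d'-1=d/(a,d)-1$. The one subtlety to verify carefully is that the containment constraint is the strict $j+k\le n$ (not merely $j+k\le n+1$), which is exactly the point where the hypothesis $a<d$ is used.
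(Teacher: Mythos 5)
The proposal is correct and follows the paper's own route exactly: the paper likewise derives $\reg R(I)=\reg F(I)=r_J(I)$ as the case $a=b$ of Theorem \ref{middle 2} and then states that $r_J(I)=d/(a,d)-1$ is ``easy to check'' for $J=(x^d,y^d)$. Your contribution is simply to supply that omitted verification, and your lattice-point computation of the reduction number (reducing the question to whether $a(n+1)\in S_n\cup(S_n+d)$ and solving the resulting congruence) is sound.
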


\begin{proof}
This is the case $a  = b$ of Theorem \ref{middle 2}.
Therefore, $\reg R(I) = \reg F(I) = r_J(I)$. For $J = (x^d,y^d)$, 
it is easy to check that $r_J(I) = d/(a,d)-1$.
\end{proof}

Now we will present another large class of monomial ideals in two variables for which the conjecture of Eisenbud and Ulrich holds.  

\begin{Theorem} \label{neighbor}
Let $I$ be an ideal in $k[x,y]$ which is generated by monomials of degree $d \ge 2$. Assume that $x^d,x^{d-1}y,y^d \in I$. Then
$\reg R(I) = \reg F(I).$
\end{Theorem}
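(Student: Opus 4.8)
The plan is to feed the ideal into the machinery of Sections~2 and~3. Since $A=k[x,y]$ is Cohen--Macaulay of dimension $2$, it is Buchsbaum with $\depth A=2>0$, and $J=(x^d,y^d)$ is a homogeneous minimal reduction of $I$. Because $d\ge 2$ and $I$ already contains the three generators $x^d,x^{d-1}y,y^d$, the ideal $I$ is not a parameter ideal, so Theorem~\ref{equality} and Theorem~\ref{equality 2} apply and give $\reg R(I)=\max\{r_J(I),s^*(I)\}$ and $\reg F(I)=\max\{r_J(I),s^*_\ini(I)\}$. As $s^*_\ini(I)\le s^*(I)$ always holds, it suffices to prove the single implication, for each $n\ge 1$,
\[
(\widetilde{I^n})_{nd}=(I^n)_{nd}\ \Longrightarrow\ \widetilde{I^n}=I^n .
\]
Indeed this makes the defining conditions for $s^*_\ini(I)$ and $s^*(I)$ coincide, whence $s^*_\ini(I)=s^*(I)$ and the two regularities agree; equivalently, one applies Corollary~\ref{initial 1} at the least $n\ge r_J(I)$ with $(\widetilde{I^n})_{nd}=(I^n)_{nd}$.

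Next I would translate everything into combinatorics. Write $I=(x^{d-i}y^i\mid i\in S)$ with $\{0,1,d\}\subseteq S\subseteq\{0,\dots,d\}$, and use the monomial formula $\widetilde{I^n}=\bigcup_{t\ge 0}I^{n+t}:(x^{td},y^{td})$ recalled at the start of this section: a monomial $x^ay^b$ lies in $\widetilde{I^n}$ exactly when $x^{a+td}y^b\in I^{n+t}$ and $x^ay^{b+td}\in I^{n+t}$ for some $t$. Since $I^m$ is generated in degree $md$ and a monomial $x^{md-k}y^k$ of degree $md$ lies in $I^m$ precisely when $k$ is an $m$-fold sum of elements of $S$, both memberships reduce to statements about such sumsets meeting short intervals. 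I would then reduce the displayed implication to the structural claim that $\widetilde{I^n}$ is \emph{generated in degree $nd$}: granting this, $\widetilde{I^n}$ and $I^n$ are both generated by their degree-$nd$ components, so equality of those components forces equality of the ideals, since a degree-$nd$ generator of $\widetilde{I^n}$ lying outside $I^n$ would itself witness $(\widetilde{I^n})_{nd}\ne(I^n)_{nd}$.

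To prove generation in degree $nd$, take a minimal monomial generator $x^ay^b$ of $\widetilde{I^n}$ and suppose $a+b=nd+e$ with $e\ge 1$, aiming at a contradiction. Minimality first forces $a,b\ge 1$, using $0,nd\in nS$, i.e. $x^{nd},y^{nd}\in I^n$. The two inputs are then: (a) a \emph{left-side control} coming from $x^d,x^{d-1}y\in I$, equivalently $0,1\in S$, namely that a monomial with small $y$-exponent and total degree $\ge nd$ already lies in $I^n$, because the run $\{0,1,\dots,n+t\}\subseteq(n+t)S$ lets one express it as a product of the generators $x^d$ and $x^{d-1}y$; and (b) a \emph{sliding-down} step for large $y$-exponent, passing from $x^ay^b$ to the degree-$nd$ monomial $x^{nd-b}y^b$ and showing the latter still lies in $\widetilde{I^n}$, by upgrading the sumset witnesses $k\le b$ and $k'\le td+b$ produced by membership to the exact values $b$ and $td+b$, again via the run $[0,n+t]\subseteq(n+t)S$ together with $d\in S$.

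The hard part will be the large-$y$ regime in~(b). Unlike the left end, near $y^d$ we have only the single generator $y^d$: there is no analogue of $x^{d-1}y$ on the right, so the sumsets $(n+t)S$ may be irregular near their top, and the membership $b\in(n+t)S$ must be decided by a ``savings'' analysis measuring how much can be subtracted from the all-$d$ sum $(n+t)d$ using $n+t$ summands from $S$. Controlling this, and in particular discharging the minimality hypotheses $x^{a-1}y^b,\ x^ay^{b-1}\notin\widetilde{I^n}$, which quantify over all $t$, is the technical heart; I expect to split into the case of small $b$, settled directly by~(a), and the case of large $b$, settled by~(b) with the run $[0,n+t]\subseteq(n+t)S$ and the presence of $d\in S$ realizing the exact exponents $b$ and $td+b$, the boundary between these two regimes being where the real work lies.
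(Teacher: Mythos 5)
Your reduction to the combinatorial statement is the right one, and it matches the paper's: both arguments come down to showing that every monomial of $\widetilde{I^n}$ is divisible by a monomial of $(\widetilde{I^n})_{nd}$, so that $(\widetilde{I^n})_{nd}=(I^n)_{nd}$ forces $\widetilde{I^n}=I^n$ (the paper invokes this only at the least $n\ge r_J(I)$ with $(\widetilde{I^n})_{nd}=(I^n)_{nd}$, via Corollary \ref{initial 1}, but that is a minor economy). The problem is that you stop exactly at the decisive step: your ``sliding-down'' claim (b), that the degree-$nd$ monomial obtained from an element of $\widetilde{I^n}$ still lies in $\widetilde{I^n}$, is announced as ``the technical heart'' and ``where the real work lies'' but never proved. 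Your worry about a ``savings analysis'' of the sumsets $(n+t)S$ near their top, and about discharging minimality hypotheses that quantify over all $t$, signals that you have not found the mechanism that makes the step go through; as written, the proposal is a plan with its hardest component missing.

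The gap closes with one identity, and it is worth seeing why no analysis of the right end of $S$ is needed. Given $x^iy^j\in\widetilde{I^n}$, use only the $y$-witness $x^iy^{j+td}\in I^{n+t}$ and pick a degree-$(n+t)d$ generator $x^ay^{(n+t)d-a}$ of $I^{n+t}$ dividing it, so $a\le i$ and $j\ge nd-a$; assume $i<nd$ (else $x^{nd}\mid x^iy^j$ and you are done), so $a<nd$. Then the candidate degree-$nd$ divisor is $x^ay^{nd-a}$, and its membership in $\widetilde{I^n}$ is immediate in both directions: multiplying by $y^{td}$ gives back the chosen generator of $I^{n+t}$, so the $y$-side is free by construction, while for the $x$-side
$$(x^ay^{nd-a})\,x^{(nd-a)d}=(x^{d-1}y)^{nd-a}\,x^{nd}\in I^{(nd-a)+n},$$
using exactly the hypothesis $x^{d-1}y\in I$. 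Taking $s=\max\{nd-a,t\}$ puts $x^ay^{nd-a}$ in $I^{n+s}:(x^{sd},y^{sd})\subseteq\widetilde{I^n}$, hence in $(I^n)_{nd}$ by the assumption on degree $nd$, and divisibility finishes the argument. In particular the ``large $y$-exponent'' regime you flag as problematic is handled uniformly by this factorization --- the irregularity of $(n+t)S$ near $(n+t)d$ never enters, because the $y$-direction condition is inherited rather than re-derived --- and arguing with an arbitrary monomial of $\widetilde{I^n}$ rather than a minimal generator removes the need to discharge any minimality hypotheses. Without this (or an equivalent) computation your proof is incomplete.
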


\begin{proof}
Let $n$ be the least integer $n \ge r_J(I)$ such that $(\widetilde {I^n})_{nd} = (I^n)_{nd}$. 
By Corollary \ref{initial 1}, we only need to show that $\widetilde {I^n} = (I^n)$.  \par

Let $x^iy^j$ be an arbitrary monomial of $\widetilde {I^n}$. 
Then $x^iy^{j+td} \in I^{n+t}$ for some $t \ge 1$. 
Since $I^{n+t}$ is generated by monomials of degree $(n+t)d$, there exists a monomial $x^ay^{(n+t)d-a} \in I^{n+t}$ such that $x^iy^{j+td}$ is divisible by $x^ay^{(n+t)d-a}$. 
By the divisibility, we have $i \ge a$ and $j + td \ge (n+t)d-a$.  \par

If $i \ge nd$, then $x^iy^j$ is divisible by $x^{nd} \in I^n$. \par
If $i < nd$,  then $a < nd$. We have 
\begin{align*}
(x^ay^{nd-a})x^{(nd-a)d} & = (x^{d-1}y)^{nd-a}x^{nd} \in I^{nd-a+n}\\
(x^ay^{nd-a})y^{td} & = x^ay^{(n+t)d-a} \in I^{n+t}.
\end{align*}
Set $s = \max\{(nd-a),t\}$.  
Then $(x^ay^{nd-a})x^{sd}, (x^ay^{nd-a})y^{sd} \in I^{n+s}$.
Hence $x^ay^{nd-a} \in I^{n+s}:(x^{sd},y^{sd}) \subseteq \widetilde {I^n}$.
Since $(\widetilde {I^n})_{nd} \in (I^n)_{nd}$, $x^ay^{nd-a} \in I^n$.
Since $i \ge a$ and $j \ge nd-a$, $x^iy^j$ is divisible by $x^ay^{nd-a}$. Thus, $x^iy^j \in I^n$. \par
So we can conclude that $\widetilde {I^n} = (I^n)$.
\end{proof}

Actually, the proof of the above cases is obtained by translating everything in terms of lattice points  in $\NN^2$. 
For every set $Q \subseteq A$ we consider the set $\Exp(Q)$ of the exponent vectors of the monomials in $Q$. \par

Let $E = \Exp(I_d)$. Then $\Exp((I^n)_{nd}) = nE$ for all $n \ge 0$,  
where $nE$ is the Minkowski sum of $n$ copies of $E$. Let   
$$F_n = \{\a \in \NN^2 \mid \a + t\e_1, \a+t\e_2 \in (n+t)E  \text{ for some } t \ge 0\},$$
where $\e_1 = (d,0)$ and $\e_2 = (0,d)$. Then
$F_n = \Exp((\widetilde {I^n})_{nd})$.
Hence $(\widetilde {I^n})_{nd} = (I^n)_{nd}$ if and only if $F_n = nE$.
Similarly, if we let 
$$F^*_n = \{\a \in \NN^2\mid \a + t\e_1, \a+t\e_2 \in (n+t)E + \NN^2 \text{ for some } t \ge 0\},$$ 
then $\Exp(\widetilde {I^n}) = F^*_n$. Hence $\widetilde {I^n} = I^n$ if and only if $F^*_n = nE+\NN^2$. 
    
\begin{Remark}
{\rm   
Let $S \subseteq \NN^2$ denote the additive monoid generated by $E$. Let $k[S]$ denote the semigroup ring $k[S]$ of $S$ over $k$. Then $k[S] \cong F(I)$. Let $S^* = \cup_{n \ge 0}F^*_n.$
Then $S^*$ is also an additive monoid. By \cite[Lemma 1.1]{Tr'}, $k[S]$ is Cohen-Macaulay or Buchsbaum if and only if  $S^* = S$ or $S^*+(S \setminus \{0\}) \subseteq  S$. In particular,  
$$H_\nn^1(k[S]) \cong k[S^*]/k[S],$$
where $\nn$ denote the maximal graded ideal of $k[S]$.   
}
\end{Remark}

Now we will give an example such that $\reg R(I) = s^*(I) = r_J(I)$ for $J = (x^d,y^d)$, but $s^*(I) > r_{J'}(I)$ for another minimal reduction $J'$ of $I$. This example shows that $r_J(I)$ is not always the minimal reduction number of $I$ for $J = (x^d,y^d)$.

\begin{Example} \label{Huckaba}
{\rm
Let $I = (x^7,x^6y,x^2y^5,y^7)$. First, we will show that $\reg R(I) = r_J(I)$ for $J = (x^7,y^7)$.  By Theorem \ref{neighbor} we know that $\reg R(I) = \reg F(I)$. Since $\reg F(I) \ge r_J(I)$ by (2), it suffices to show that $\reg F(I) = r_J(I)$. It is easy to check that $r_J(I) = 4$ and $(\widetilde {I^4})_{28} = (I^4)_{28}$. By Theorem \ref{equality 2}, this implies $\reg F(I) = 4$. On the other hand, it is shown in \cite[Example 3.1]{Huc} that $r_{J'}(I) \le 3$ for $J' = (x^7,x^6y+y^7)$. By Theorem \ref{equality}, this implies $\reg R(I) = s^*(I) = r_J(I)$. 
}
\end{Example}

The following example shows that we may have $\reg R(I) = s^*(I) > r_J(I)$, where $J = (x^d,y^d)$.  
We do not know whether $br(I) = r_J(I)$ in this example. 
If $br(I) = r_J(I)$, this will give a negative answer to 
the question of Remark \ref{question}.

\begin{Example} \label{big}
{\rm Let $I = (x^{17-i}y^i|\ i = 0,1,3,5,13,14,16,17)$. By \cite[Example 3.2]{HHS}, we have $\reg F(I) = 4 > r_J(I) = 3$, where $J = (x^{17},y^{17})$. By Theorem \ref{neighbor} we have $\reg R(I) = \reg F(I) = 4$. Hence,
Theorem \ref{equality} implies $\reg R(I) = s^*(I) > r_J(I)$.
}
\end{Example}

%%%%%%%%%%%%%%%%%%%%%%%%%%%

\end{document}